\documentclass[11pt]{article}
\usepackage[
  left=2.7cm,
  right=2.7cm,
  top=2.8cm,
  bottom=2.8cm
]{geometry}

\usepackage[T1]{fontenc}
\usepackage[utf8]{inputenc}

\usepackage{amssymb,mathtools}

\usepackage{microtype}

\usepackage{enumitem}
\usepackage{subcaption}

\usepackage{xcolor}
\usepackage{booktabs}

\usepackage{pgfplots}
\pgfplotsset{compat=1.18}

\usepackage[colorlinks=true,linkcolor=blue,citecolor=blue,urlcolor=blue]{hyperref}

\usepackage{dsfont}
\usepackage{mathrsfs}
\newcommand{\1}{\mathds{1}}

\usepackage{amsthm}
\newtheorem{theorem}{Theorem}[section]

\newtheorem{proposition}[theorem]{Proposition}
\newtheorem{lemma}[theorem]{Lemma}
\newtheorem{corollary}[theorem]{Corollary}

\theoremstyle{definition}
\newtheorem{definition}[theorem]{Definition}
\theoremstyle{remark}
\newtheorem{remark}[theorem]{Remark}

\theoremstyle{definition}
\newtheorem{example}{Example}[section]

\DeclareMathOperator{\argmin}{argmin}

\newcommand{\R}{\mathbb{R}}
\newcommand{\E}{\mathbb{E}}
\newcommand{\MoreauYosida}[2]{\operatorname{e}_{#2} #1}

\newcommand{\proj}{\operatorname{proj}}

\newcommand{\dom}{\operatorname{dom}}

\makeatletter
\def\namedlabel#1#2{\begingroup
    #2%
    \def\@currentlabel{#2}%
    \phantomsection\label{#1}\endgroup}

\usepackage{todonotes}

\title{Sharp bounds for stochastic proximal and projection estimators via radial dominance}

\author{%
Gonzalo Contador\thanks{Universidad Técnica Federico Santa María, 
           Santiago, Chile. \texttt{gonzalo.contador@usm.cl}}
\and 
  Pedro P\'erez-Aros\thanks{Departamento de Ingenier\'ia Matem\'atica and Center for Mathematical Modeling (CNRS IRL2807), Universidad de Chile, Santiago, Chile. \texttt{pperez@dim.uchile.cl}}
  \and
  Emilio Vilches\thanks{Instituto de Ciencias de la Ingenier\'ia, Universidad de O'Higgins, Rancagua, Chile; and Center for Mathematical Modeling (CNRS IRL2807), Universidad de Chile, Santiago, Chile. \texttt{emilio.vilches@uoh.cl}}%
}

\date{\today}

\begin{document}

\maketitle

\begin{abstract}
We study stochastic barycentric estimators for proximal points and metric
projections obtained by exponentially reweighting Gaussian perturbations.
Our main result is an abstract comparison theorem for probability measures
with densities proportional to an exponential weight, under a radial
dominance condition relative to a prescribed profile. This yields an
explicit upper bound for the norm of the associated barycenter in terms of
a one-dimensional comparison measure. We also provide tractable sufficient
conditions for radial dominance, including strong convexity, addition of
nonnegative convex terms, and star-shaped constraints.

As a consequence, we obtain a refined convergence rate for stochastic
proximal estimators of weakly convex functions, together with asymptotic
sharpness of the constant. The same framework yields a corresponding rate
for stochastic projection estimators onto closed convex sets. We further establish basic structural properties of the barycentric
approximation operator, such as smoothness and cocoercivity. Numerical
experiments illustrate the predicted rate, the dimensional
scaling of the constant, and its asymptotic sharpness.
\end{abstract}

\medskip
\noindent\textbf{2020 Mathematics Subject Classification:} Primary 49J53; Secondary 49J42, 90C30, 90C25.\\
\noindent\textbf{Keywords:} proximal mapping, weakly convex functions, stochastic approximation, convergence rates.
\medskip

\section{Introduction}

Proximal mappings and metric projections are fundamental objects in
variational analysis and optimization. Introduced by Moreau in
\cite{MR201952}, proximal mappings now play a central role in modern
optimization through proximal algorithms, splitting methods, and their many
variants; see, for example, \cite{MR3616647}. Given a proper function
\(f\colon \R^n\to \R\cup\{+\infty\}\) and \(\lambda>0\), the proximal point
\(\operatorname{prox}_{\lambda f}(x)\) is the unique minimizer of
\(y\mapsto f(y)+\frac{1}{2\lambda}\|x-y\|^2\) whenever \(f\) is
\(\rho\)-weakly convex and \(0<\lambda<1/\rho\). In the particular case
\(f=\iota_C\), where \(C\subset \R^n\) is a closed convex set, the proximal
mapping reduces to the metric projection \(\proj_C\).

In many applications, however, the exact evaluation of 
\(\operatorname{prox}_{\lambda f}(x)\) or \(\proj_C(x)\) is computational demanding, and sometimes impractical. This motivates the study of stochastic or barycentric
approximations built from random perturbations and exponential
reweighting. In this work, for \(\delta>0\), we consider the estimator
\begin{equation}\label{eq:intro_mdelta}
m_\delta(x)
:=
\frac{\E_{Y\sim\mathcal{N}(x,\delta\lambda I)}
\!\big[Y\,e^{-f(Y)/\delta}\big]}
{\E_{Y\sim\mathcal{N}(x,\delta\lambda I)}
\!\big[e^{-f(Y)/\delta}\big]}.
\end{equation}
When \(f=\iota_C\), this reduces to the conditional mean
\[
p_\delta(x)=\E[Y\mid Y\in C],\qquad Y\sim \mathcal N(x,\delta I),
\]
which may be viewed as a stochastic projection estimator or smoothed
projection onto \(C\). The estimator \(m_\delta(x)\) is of interest for several reasons. First, it provides a zeroth-order approximation of the proximal mapping, since its evaluation only requires function values of \(f\) and does not rely on subgradient information or on the explicit computation of \(\operatorname{prox}_{\lambda f}(x)\). Second, because it is defined through Gaussian expectations, it is naturally suited to Monte Carlo approximation and parallel implementation. Third, in contrast with the generally nonsmooth proximal mapping, the map \(x\mapsto m_\delta(x)\) is smooth, which makes it attractive for sensitivity analysis and differentiable optimization schemes. Thus, the barycentric estimator offers a tractable and regular surrogate of the proximal point, together with explicit nonasymptotic error bounds.

Estimators of the form \eqref{eq:intro_mdelta} arise naturally from
Laplace-type asymptotics for integrals with kernel \(e^{-g/\delta}\), where
\(g(y):=f(y)+\frac{1}{2\lambda}\|x-y\|^2\). This perspective appears
prominently in recent work. Motivated by Hamilton-Jacobi equations,
Osher, Heaton, and Fung \cite{MR4581306} introduced the HJ-Prox method and
proved convergence of barycentric approximations toward proximal points
under mild assumptions. More recently, \cite{MR4923371} developed a
self-normalized Laplace approximation framework that also covers smoothed
projection estimators and establishes asymptotic consistency under weak
local hypotheses.

More broadly, estimator \eqref{eq:intro_mdelta} belongs to the
Gaussian-smoothing and zeroth-order lineage, in which Gaussian expectations
are used to build smooth surrogates of nonsmooth objectives and their
proximal maps \cite{NesterovSpokoiny2017}. A common feature of these works
is that they establish \emph{qualitative} convergence of the estimator to
the exact proximal point as the smoothing parameter \(\delta\) vanishes,
but stop short of identifying the exact rate at which this convergence
occurs or the sharp constant governing it. This gap is not merely academic:
when \eqref{eq:intro_mdelta} is evaluated by Monte Carlo sampling, the
variance of the estimator deteriorates as \(\delta\downarrow 0\)
\cite{NaldiLabarriereMolinariVilla2026}, so that the smallest admissible
\(\delta\) attaining a target accuracy (and hence the sampling cost) is
dictated precisely by the constant in the error bound. Our contribution is
therefore not the estimator itself, nor the fact that it converges, both of
which are known, but rather a \emph{sharp} nonasymptotic bound with explicit
dimensional dependence, together with a matching asymptotic-sharpness
certificate showing that the constant cannot be improved within this
estimator class.  

A related fixed-parameter perspective on the same barycentric
operator appears in \cite{NaldiLabarriereMolinariVilla2026}; the present
paper follows a different route, focusing on sharp one-step bias estimates
relative to the classical proximal and projection maps.

The main contribution of the present paper is the introduction of a new
technique, based on a radial dominance comparison principle, for the
analysis of barycentric estimators associated with densities proportional
to \(e^{-G/\delta}\). This approach yields explicit and, in a precise
sense, optimal nonasymptotic bounds for the approximation error, and in
particular refines the convergence estimates obtained in \cite{MPV2026}. More precisely, \cite{MPV2026} establishes the dimension-explicit bound
$\|m_\delta(x)-\operatorname{prox}_{\lambda f}(x)\|\le\sqrt{n\delta/\mu}$,
derived from a second-moment (variance) estimate followed by Jensen's
inequality. Here we sharpen the constant to
$\sqrt{2\delta/\mu}\,\Gamma\!\big(\tfrac{n+1}{2}\big)/\Gamma\!\big(\tfrac{n}{2}\big)$,
which is strictly smaller for every finite $n$ and asymptotically equivalent
to $\sqrt{n\delta/\mu}$ as $n\to\infty$. This sharp constant is exactly the
value attained, in the limit of vanishing aperture, by the extremal cone
example of \cite{MPV2026}; the radial dominance principle promotes it to a
uniform upper bound by controlling the first moment of the barycenter
directly, rather than passing through its second moment.
Under a radial dominance condition relative to a one-dimensional profile
\(\psi\), we compare the radial law of the barycenter with an explicit
one-dimensional reference measure, leading to sharp upper bounds for its
norm. We then apply this abstract principle to reduced proximal
objectives and derive improved convergence rates for stochastic proximal
and projection estimators.

\subsection*{Our contributions}

The main contributions of the paper may be summarized as follows.

\begin{itemize}
    \item We introduce a radial dominance framework for measures with
    density proportional to the negative exponential of a given function and prove an abstract
    stochastic comparison theorem that bounds the barycenter by the mean of
    an explicit one-dimensional comparison measure.

    \item We show that the resulting bound is asymptotically sharp and
    provide tractable sufficient conditions for radial dominance,
    including strong convexity, addition of nonnegative convex terms, and
    star-shaped constraints.

    \item We apply the abstract result to proximal estimators associated
    with \(\rho\)-weakly convex functions and obtain a refined rate of
    order \(\sqrt{\delta}\) with an explicit dimension-dependent constant.
    We also derive the corresponding projection result for conditional
    Gaussian barycenters over closed convex sets.

    \item We illustrate the theory on concrete examples, including
    Lasso-type objectives, and study basic structural properties of the
    barycentric approximation operator, such as smoothness, cocoercivity,
    and its use in inexact proximal-type iterations.
\item We complement the analysis with numerical experiments that
    confirm the $\sqrt{\delta}$ convergence rate, exhibit the dimensional
    dependence of the constant, and demonstrate its asymptotic sharpness
    through the extremal cone construction.
\end{itemize}
A key contribution of the paper is the identification of a simple geometric mechanism, namely radial dominance, which underlies both the abstract comparison theorem and its proximal applications in a unified framework.

The remainder of the paper is organized as follows.
Section~\ref{preliminaries} introduces the notation and preliminary material.
Section~\ref{radial-dominance-section} presents the radial dominance framework and its basic structural properties.
In Section~\ref{abstract-radial-dominance}, we establish the abstract radial dominance bound and derive a general proximal reduction theorem.
Section~\ref{weakly-convex} specializes these results to weakly convex proximal estimators and stochastic projection estimators, and discusses several illustrative examples.
Section~\ref{monotone-prox} is devoted to structural properties of the barycentric approximation operator and to a proximal-type iterative scheme based on these estimators. Section~\ref{numerics} reports numerical experiments that corroborate the theoretical rates and the sharpness of the constant.
We conclude with a discussion of final remarks and directions for future research.
\section{Mathematical Preliminaries}\label{preliminaries}
Throughout this paper, we work in the  Euclidean space $\mathbb{R}^n$ endowed with the inner product $\langle \cdot, \cdot\rangle$ and the associated norm $\Vert \cdot\Vert$. The closed unit ball in $\mathbb{R}^n$ is denoted by $\mathbb{B}$. Moreover, \(\mathbb S^{n-1}\) denotes the unit
sphere in \(\R^n\), and $\omega_{n-1}$ denotes its \((n-1)\)-dimensional surface measure, that is, $$\omega_{n-1}= \frac{2 \pi^{n/2}}{\Gamma\left(\frac{n}{2}\right)}.$$ We use the convention $\exp(+\infty) =+\infty $ and $\exp( -\infty ) = 0$.  For a set $A\subset\mathbb{R}^n$, its \emph{indicator function} is defined by $\iota_A(x)=0$ if $x\in A$, and $\iota_A(x)=+\infty$ otherwise. The \emph{distance function} from $x\in \mathbb{R}^n$  to a set $A$ is given by $d_{A}(x)=\inf\left\{  \|  x - y\| : y\in A\right\}$. If $A$ is nonempty, closed and convex, then $\operatorname{proj}_{A} (x)$ denotes the  \emph{metric projection} of $x\in \mathbb{R}^n$ onto $A$.  Given a function $g\colon \mathbb{R}^n \rightarrow \mathbb{R}\cup \{+\infty \}$, the \emph{domain} of $g$ is
\begin{equation*}
\operatorname{dom}(g):=\{x\in \mathbb{R}^n: g(x)<+\infty \}.
\end{equation*}
We say that $g$ is \emph{proper} if $\operatorname{dom}g\neq
\emptyset$. Given $\rho\geq 0$, a function $g\colon \R^n\to \mathbb{\R}\cup \{+\infty \}$ is called $\rho$-weakly convex if $g+\frac{\rho}{2}\|\cdot\|^2$ is convex. For $\mu>0$, we say that $g$ is $\mu$-strongly convex if $g-\frac{\mu}{2}\Vert \cdot\Vert^2$ is convex.

The Moreau envelope of index $\lambda>0$ of a function $g\colon \R^n\to \mathbb{R}\cup \{+\infty\}$ is   defined by 
\begin{equation*}
    \MoreauYosida{g}{\lambda}(x):=\inf_{y\in \mathbb{R}^n} \left(	g(y) + \frac{1}{2\lambda } \|  x - y\|^2		\right) \textrm{ for all } x\in \mathbb{R}^n.
\end{equation*}
 If $g$ is proper, lower semicontinuous, and $\rho$-weakly convex, and if $0<\lambda<\frac{1}{\rho}$ (with the convention $1/0=+\infty$), then the above infimum is attained at a unique point $\operatorname{prox}_{\lambda g}(x)\in \mathbb{R}^n$, and
\begin{equation*}
\begin{aligned}
\MoreauYosida{g}{\lambda}(x)&=g(\operatorname{prox}_{\lambda g}(x))+\frac{1}{2\lambda}\Vert x-\operatorname{prox}_{\lambda g}(x)\Vert^2.
\end{aligned}
\end{equation*}
In this case, the operator $x\mapsto \operatorname{prox}_{\lambda g}(x)$ is everywhere defined and is called the \emph{proximal operator} of $g$ of index $\lambda$. Moreover, $\MoreauYosida{g}{\lambda}$ is continuously differentiable and, for each  $x\in \mathbb{R}^n$, 
\begin{equation*}
\nabla \MoreauYosida{g}{\lambda}(x)=\frac{1}{\lambda}(x-\operatorname{prox}_{\lambda g}(x)).
\end{equation*}
We refer to \cite{MR3288271,MR3616647} for background on weakly convex functions, proximal mappings, and Moreau envelopes.

Let $\delta>0$, and let $g\colon \mathbb{R}^n \to \mathbb{R}\cup \{+\infty\}$ be a proper function such that $e^{-g/\delta}$ is integrable and
$$
\int_{\mathbb{R}^n}e^{-g(y)/\delta}dy>0.
$$
Under these assumptions, we denote by \(\sigma_\delta^g\) the
probability measure on \(\R^n\) defined by
\[
\sigma_\delta^g(A)
:=
\frac{\displaystyle\int_A e^{-g(y)/\delta}\,dy}
{\displaystyle\int_{\R^n} e^{-g(y)/\delta}\,dy}
\]
for every Lebesgue-measurable set \(A\subset\R^n\). Hence, for every integrable
function \(w\), we write
\[
\E_{\sigma_\delta^g}[w]
:=
\frac{\displaystyle\int_{\R^n} w(y)e^{-g(y)/\delta}\,dy}
{\displaystyle\int_{\R^n} e^{-g(y)/\delta}\,dy}.
\]
When no confusion is possible, we simply write \(\sigma_\delta\) instead
of \(\sigma_\delta^g\). If \(Y\sim \sigma_\delta^g\), we denote by
\(\operatorname{Cov}_{\sigma_\delta^g}(Y)\) its covariance matrix,
namely
\[
\operatorname{Cov}_{\sigma_\delta^g}(Y)
:=
\E_{\sigma_\delta^g}
\bigl[(Y-\E_{\sigma_\delta^g}[Y])
(Y-\E_{\sigma_\delta^g}[Y])^\top\bigr].
\]
For symmetric matrices \(A,B\in\R^{n\times n}\), we write $A\preceq B$ when \(B-A\) is positive semidefinite.

Let \(f\colon \mathbb{R}^n\to \mathbb{R}\cup\{+\infty\}\) be a \(\rho\)-weakly convex function with \(\rho\ge 0\). 
In this work, we consider the barycenter estimator
\[
m_{\delta}(x;f)
:=
\frac{\mathbb{E}_{Y\sim \mathcal{N}(x,\delta \lambda I)}
\!\bigl[Y\,e^{-f(Y)/\delta}\bigr]}
{\mathbb{E}_{Y\sim \mathcal{N}(x,\delta \lambda I)}
\!\bigl[e^{-f(Y)/\delta}\bigr]}.
\]
If \(f\) is \(\rho\)-weakly convex for some \(\rho\ge 0\) and
\(\operatorname{dom}f\) has nonempty interior, then \(m_\delta(x;f)\) is
well defined for every \(\delta>0\) and every \(0<\lambda<1/\rho\). When
the function \(f\) is clear from the context, we simply write
\(m_\delta(x)\).

We also use the usual stochastic order (see, e.g., \cite{ShakedShanthikumar2007}): for real-valued random variables
\(X\) and \(Y\), we write \(X\le_{\mathrm{st}}Y\) if
\[
\mathbb P(X>t)\le \mathbb P(Y>t)
\qquad \text{for all } t\in\mathbb R.
\]
We conclude this section by establishing a covariance bound for nonsmooth
strongly log-concave measures.
\begin{lemma}\label{lem:covariance-strongly-log-concave}
Let \(V\colon \mathbb R^n\to(-\infty,+\infty]\) be proper, lower semicontinuous,
and \(\bar\mu\)-strongly convex for some \(\bar\mu>0\). Let \(\nu\) be the
probability measure defined by $d\nu(y):=Z^{-1}e^{-V(y)}\,dy$,  where
\[
0<Z:=\int_{\mathbb R^n}e^{-V(y)}\,dy<+\infty.
\]
Then $Y\sim \nu$ has finite second moment, and $\operatorname{Cov}_{\nu}(Y)\preceq \frac{1}{\bar\mu}I$. Equivalently, \(\operatorname{Var}_{\nu}(\langle a,Y\rangle)\le \|a\|^{2}/\bar\mu\)
for every \(a\in\mathbb R^n\).
\end{lemma}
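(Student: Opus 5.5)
The plan is to reduce to the smooth case, where the Brascamp--Lieb inequality applies, and then pass to the limit. First, finite second moments. Since \(V\) is \(\bar\mu\)-strongly convex, proper, and lower semicontinuous, it is minorized by a quadratic: \(V(y)\ge V(y_0)+\langle \xi,y-y_0\rangle+\frac{\bar\mu}{2}\|y-y_0\|^2\) for some \(y_0\) in the relative interior of \(\dom V\) and a subgradient \(\xi\). Hence \(e^{-V}\) has Gaussian tails, and all moments of \(\nu\) are finite. The condition \(Z>0\) forces \(\dom V\) to have nonempty interior, so \(\nu\) is absolutely continuous with a genuine density.

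Second, the smooth case. If \(V\) is \(C^2\) with \(\nabla^2 V\succeq \bar\mu I\), then Brascamp--Lieb gives, for every smooth \(h\) with suitable growth,
\[
\operatorname{Var}_\nu(h)\le \E_\nu\bigl[\langle (\nabla^2V)^{-1}\nabla h,\nabla h\rangle\bigr]\le \tfrac{1}{\bar\mu}\E_\nu\|\nabla h\|^2 .
\]
Taking \(h(y)=\langle a,y\rangle\) yields \(\operatorname{Var}_\nu(\langle a,Y\rangle)\le \|a\|^2/\bar\mu\). Alternatively, one can use the Poincaré inequality with constant \(1/\bar\mu\) from Bakry--Émery.

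Third, approximation. Write \(V=\frac{\bar\mu}{2}\|\cdot\|^2+W\) with \(W\) convex, proper, and lsc, and set
\[
V_k:=\tfrac{\bar\mu}{2}\|\cdot\|^2+\MoreauYosida{W}{1/k}.
\]
The Moreau envelope of a convex function is convex and \(C^{1,1}\), and it increases pointwise to \(W\) as \(k\to\infty\), including \(W=+\infty\) off \(\dom W\). Each \(V_k\) is \(\bar\mu\)-strongly convex but only \(C^{1,1}\). I would therefore further mollify, \(V_{k,\varepsilon}=V_k*\varphi_\varepsilon\). Mollification preserves \(\bar\mu\)-strong convexity, since it averages translates of a strongly convex function, and it produces a \(C^\infty\) function. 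Alternatively, the variational formulation of Brascamp--Lieb, or the Poincaré inequality, can be applied directly to convex \(C^{1,1}\) potentials via a.e. Hessians. So the smooth-case bound holds for each \(\nu_{k,\varepsilon}\propto e^{-V_{k,\varepsilon}}\).

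Finally, pass to the limit. As \(\varepsilon\to0\), \(V_{k,\varepsilon}\to V_k\) locally uniformly. As \(k\to\infty\), \(e^{-V_k}\downarrow e^{-V}\) pointwise, which uses lower semicontinuity to get convergence to \(+\infty\) outside the domain. All these densities are dominated by a common Gaussian \(C e^{-\frac{\bar\mu}{2}\|y\|^2+c\|y\|}\), because the quadratic minorant of \(W\) is inherited by its Moreau envelopes uniformly in \(k\ge1\). Dominated convergence then gives convergence of the normalizing constants, with the limit \(Z>0\), and of the first and second moments. Hence the variances converge and the bound \(\operatorname{Cov}_\nu(Y)\preceq \bar\mu^{-1}I\) follows.

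The main obstacle is producing this uniform Gaussian domination while handling the extended-valued \(V\). I would obtain the uniform affine minorant of \(\MoreauYosida{W}{1/k}\) from the fact that \(W\ge \ell\) for an affine \(\ell\) implies \(\MoreauYosida{W}{1/k}\ge \MoreauYosida{\ell}{1/k}=\ell-\frac{\|\nabla \ell\|^2}{2k}\).
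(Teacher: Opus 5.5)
Your proposal is correct and follows essentially the same route as the paper: Brascamp--Lieb for smooth $\bar\mu$-strongly convex potentials, regularization of $W=V-\frac{\bar\mu}{2}\|\cdot\|^2$ by a Moreau envelope followed by mollification, a uniform Gaussian envelope from the affine minorant $\ell-\frac{1}{2k}\|\nabla\ell\|^2$ of the envelopes, and dominated convergence of the zeroth, first and second moments. The one difference, which is minor and somewhat cleaner, is that you take the limits iteratively ($\varepsilon\to0$, then $k\to\infty$), so the everywhere-monotone convergence $e^{-V_k}\downarrow e^{-V}$ replaces the paper's diagonal coupling $\varepsilon_k\le\tau_k$ and its interior/exterior/null-boundary case analysis; for the $\varepsilon$-limit you should state explicitly that, with a symmetric mollifier, Jensen's inequality gives $V_k*\varphi_\varepsilon\ge V_k$, which is what supplies the domination there.
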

\begin{proof}
Write \(V=H+\tfrac{\bar\mu}{2}\|\cdot\|^{2}\), where
\(H:=V-\tfrac{\bar\mu}{2}\|\cdot\|^{2}\) is proper, lower semicontinuous and
convex. Being proper, lsc and convex, \(H\) admits an affine minorant: there
exist \(b\in\mathbb R^n\), \(\beta\in\mathbb R\) with
\begin{equation}\label{eq:affine-minorant}
H(z)\ \ge\ \langle b,z\rangle+\beta\qquad\text{for all }z\in\mathbb R^n .
\end{equation}
Consequently \(V(y)\ge\langle b,y\rangle+\beta+\tfrac{\bar\mu}{2}\|y\|^{2}\), so $\int_{\mathbb{R}^n}(1+\|y\|^{2})e^{-V(y)}\,dy<+\infty$. Hence \(Y\) has finite second moment
and \(\operatorname{Cov}_\nu(Y)\) is well defined. Moreover \(Z>0\) forces
\(\operatorname{dom}V=\operatorname{dom}H\) to have positive Lebesgue measure, so
the convex set \(\operatorname{dom}H\) is full-dimensional; in particular
\(\operatorname{int}(\operatorname{dom}H)\neq\emptyset\) and its topological
boundary \(\partial(\operatorname{dom}H)\) is Lebesgue-null.\\
\noindent \emph{Step 1 (smooth case).}
If \(W\in C^{2}(\mathbb R^n)\) satisfies \(D^{2}W\succeq\bar\mu I\) and
\(\int_{\mathbb{R}^n} e^{-W}<+\infty\), the Brascamp-Lieb variance inequality
\cite[Theorem~4.1]{MR450480} applied to \(\varphi_a(y):=\langle a,y\rangle\)
(for which \(\nabla\varphi_a\equiv a\)) gives, writing
\(d\mu_W:=(\int_{\mathbb R^n} e^{-W})^{-1}e^{-W}\,dy\),
\[
\operatorname{Var}_{\mu_W}(\varphi_a)
\ \le\
\int_{\mathbb R^n}\big\langle (D^{2}W)^{-1}a,\,a\big\rangle\,d\mu_W
\ \le\
\frac{\|a\|^{2}}{\bar\mu},
\]
where in the last step we have used that $D^{2}W\succeq\bar\mu I$ implies $(D^{2}W)^{-1}\preceq\bar\mu^{-1}I\).\\
\noindent \emph{Step 2 (regularization).}
Let \(\rho\) be a symmetric \(C^\infty\) mollifier supported in the unit ball
with \(\int\rho=1\), and set \(\rho_\varepsilon(\cdot)=\varepsilon^{-n}\rho(\cdot/\varepsilon)\).
For \(\tau>0\) let \(H_\tau\) be the Moreau envelope
\[
H_\tau(y):=\inf_{z\in\mathbb R^n}\Big\{H(z)+\tfrac{1}{2\tau}\|y-z\|^{2}\Big\},
\]
which is finite, convex, and \(C^{1}\) with \(\tau^{-1}\)-Lipschitz gradient.
Fix sequences \(\tau_k\downarrow0\) and \(\varepsilon_k\downarrow0\) with
\(\varepsilon_k\le\tau_k\le 1\), and put
\[
H_k:=H_{\tau_k}*\rho_{\varepsilon_k},\qquad
V_k:=H_k+\tfrac{\bar\mu}{2}\|\cdot\|^{2},\qquad
d\nu_k:=Z_k^{-1}e^{-V_k}\,dy,\ \ Z_k:=\!\int_{\mathbb R^n} e^{-V_k}.
\]
Each \(H_k\) is convex (convolution of a convex function with a nonnegative
kernel) and \(C^\infty\), so \(V_k\in C^\infty(\mathbb R^n)\) and
\(D^{2}V_k=D^{2}H_k+\bar\mu I\succeq\bar\mu I\) everywhere.\\
\noindent \emph{Step 3 (uniform integrable domination).}
Minimizing the affine-plus-quadratic minorant obtained from
\eqref{eq:affine-minorant} gives, for all \(y\),
\(H_\tau(y)\ge\langle b,y\rangle+\beta-\tfrac{\tau}{2}\|b\|^{2}\). Since \(\rho\)
is symmetric with \(\operatorname{supp}\rho_{\varepsilon_k}\subseteq
\{|u|\le\varepsilon_k\}\), convolution preserves this bound:
\[
H_k(y)=\int_{\mathbb R^n} H_{\tau_k}(y-u)\,\rho_{\varepsilon_k}(u)\,du
\ \ge\ \langle b,y\rangle+\beta-\tfrac{\tau_k}{2}\|b\|^{2}
\ \ge\ \langle b,y\rangle+\beta-\tfrac12\|b\|^{2}.
\]
Hence, uniformly in \(k\),
\[
0\le e^{-V_k(y)}\le e^{\frac12\|b\|^{2}-\beta}\,
e^{-\langle b,y\rangle-\frac{\bar\mu}{2}\|y\|^{2}}=:g(y) \,\textrm{ and }\, \int_{\mathbb R^n}(1+\|y\|^{2})\,g(y)\,dy<\infty .
\]
\noindent \emph{Step 4 (a.e.\ pointwise convergence).}
We show \(V_k\to V\) Lebesgue-a.e.\\
If \(y\in\operatorname{int}(\operatorname{dom}H)\), then \(\partial H(y)\neq\emptyset\)
and the minimal-norm subgradient \(\|\partial^{0}H(y)\|\) is finite; the Moreau
envelope satisfies \(H_{\tau_k}(y)\uparrow H(y)\) as \(\tau_k\downarrow0\)
 (see, e.g., \cite[Thm~1.25]{RockafellarWets1998}). Using
\(\|\nabla H_{\tau_k}(y)\|\le\|\partial^{0}H(y)\|\) and the
\(\tau_k^{-1}\)-Lipschitz continuity of \(\nabla H_{\tau_k}\),
\[
|H_k(y)-H_{\tau_k}(y)|
\le \varepsilon_k\Big(\|\nabla H_{\tau_k}(y)\|+\tfrac{\varepsilon_k}{\tau_k}\Big)
\le \varepsilon_k\big(\|\partial^{0}H(y)\|+1\big)\xrightarrow[k\to\infty]{}0 ,
\]
since \(\varepsilon_k/\tau_k\le1\). Therefore \(H_k(y)\to H(y)\), and
\(V_k(y)\to V(y)\).\\
If \(y\notin\overline{\operatorname{dom}H}\), set \(d:=\operatorname{dist}(y,\operatorname{dom}H)>0\).
For \(|u|\le\varepsilon_k<d/2\) every competitor \(z\in\operatorname{dom}H\) obeys
\(\|(y-u)-z\|\ge d/2\); combining this with \eqref{eq:affine-minorant} and
minimizing over the admissible radius yields, for all small \(\tau_k\),
\[
H_k(y)\ \ge\ \inf_{|u|\le\varepsilon_k}H_{\tau_k}(y-u)
 \ge\ -\|b\|\big(\|y\|+1\big)+\beta+\frac{d^{2}}{8\tau_k}-\frac{\|b\|\,d}{2}
 \xrightarrow[k\to\infty]{}\ +\infty ,
\]
so \(V_k(y)\to+\infty=V(y)\).\\
The remaining set \(\partial(\operatorname{dom}H)\) is Lebesgue-null, so
\(e^{-V_k}\to e^{-V}\) a.e.\\
\noindent \emph{Step 5 (limit of moments).}
By Steps 3, 4 and dominated convergence (with envelopes \(g\), \(\|y\|g\),
\(\|y\|^{2}g\in L^{1}\)),
\[
Z_k\to Z>0,\qquad
\int_{\mathbb{R}^n} y\,e^{-V_k}\,dy\to\int_{\mathbb{R}^n} y\,e^{-V}\,dy,\qquad
\int_{\mathbb{R}^n} yy^{\top}e^{-V_k}\,dy\to\int_{\mathbb{R}^n} yy^{\top}e^{-V}\,dy .
\]
Hence the means \(m_k\) and second-moment matrices \(S_k\) of \(\nu_k\) converge
to those of \(\nu\), so
\(\operatorname{Cov}_{\nu_k}(Y)=S_k-m_km_k^{\top}\to
\operatorname{Cov}_{\nu}(Y)\) entrywise. In particular, for every
\(a\in\mathbb R^n\),
\[
\operatorname{Var}_{\nu_k}(\langle a,Y\rangle)
\ \longrightarrow\
\operatorname{Var}_{\nu}(\langle a,Y\rangle).
\]
Finally, applying Step 1 to \(W=V_k\) gives
\(\operatorname{Var}_{\nu_k}(\langle a,Y\rangle)\le\|a\|^{2}/\bar\mu\) for every
\(k\). Passing to the limit, we obtain 
\[
a^{\top}\operatorname{Cov}_{\nu}(Y)\,a
=\operatorname{Var}_{\nu}(\langle a,Y\rangle)
\le\frac{\|a\|^{2}}{\bar\mu}
=a^{\top}\Big(\tfrac1{\bar\mu}I\Big)a
\qquad\text{for all }a\in\mathbb R^n ,
\]
which is exactly \(\operatorname{Cov}_{\nu}(Y)\preceq\bar\mu^{-1}I\).
\end{proof}

\section{Radial Dominance Framework}\label{radial-dominance-section}
In this section, we introduce the radial dominance condition, which is the basic structural assumption underlying our analysis. Roughly speaking, it requires that the growth of a given function along every ray emanating from the origin be bounded from below by a prescribed one-dimensional profile. This condition allows us to reduce the study of the associated barycenter to a comparison with an explicit radial measure on \([0,\infty)\). We also record two simple but useful sufficient conditions for radial dominance, namely strong convexity and stability under the addition of nonnegative convex terms or star-shaped constraints. These results will serve as the main tools in the proof of the abstract comparison theorem and its proximal applications.

\begin{definition}Let $G\colon \mathbb{R}^n \to \mathbb{R}\cup \{+\infty\}$ be proper and lower semicontinuous, with $G(0)=0$, and let  $\psi\colon [0,\infty)\to [0,\infty)$ be a function. 
    \begin{itemize}
        \item $\psi$ is called an \emph{admissible profile} if it is convex, nondecreasing, locally absolutely continuous, satisfies $\psi(0)=0$, and 
        \begin{equation}\label{Admissible}
        M_{\psi} := \int_{0}^{\infty} r^{n-1}e^{-\psi(r)/\delta}\,dr <\infty.
        \end{equation}
        
        \item Given an admissible profile $\psi$, the \emph{comparison radial measure} is the probability measure  $\nu_{\psi,\delta}$ on $[0,\infty)$ whose density with respect to
    Lebesgue measure is
        $$
        \frac{d\nu_{\psi,\delta}}{dr}(r)=\frac{r^{n-1}e^{-\psi(r)/\delta}}{M_{\psi}}.
        $$
        We fix once and for all a random variable $\widetilde R$ with law $\nu_{\psi,\delta}$; it implies that  $\mathbb{E}[\widetilde R]=\int_0^\infty r\,d\nu_{\psi,\delta}(r)$ is the one-dimensional comparison constant that appears in the bounds below.
        \item   We say that $G$ satisfies the \emph{radial $\psi$-dominance condition} if
        \begin{equation}\label{dominance}
        G(r_2\omega)-G(r_1 \omega)\geq \psi(r_2)-\psi(r_1) \quad \textrm{ for all } 0\leq r_1\leq r_2 \textrm{ and } \omega \in \mathbb{S}^{n-1}.
        \end{equation}
    \end{itemize}
\end{definition}

The next lemma follows directly from \eqref{dominance}.
\begin{lemma}\label{radial-monotonicity}
Suppose that $G$ satisfies \eqref{dominance} with profile $\psi$. Then, for every $\omega\in\mathbb{S}^{n-1}$, the function $r\longmapsto G(r\omega)-\psi(r)$ is nondecreasing on $[0,\infty)$.
\end{lemma}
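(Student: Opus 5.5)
The claim is a direct rearrangement of \eqref{dominance}. We fix $\omega\in\mathbb{S}^{n-1}$ and set $h_\omega(r):=G(r\omega)-\psi(r)$ for $r\ge 0$. We then take arbitrary $0\le r_1\le r_2$ and show $h_\omega(r_1)\le h_\omega(r_2)$.

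\emph{Finite case.} Suppose first that $G(r_1\omega)<+\infty$. Recall that $\psi$ is real-valued, being a map into $[0,\infty)$. So we may move $\psi(r_2)-\psi(r_1)$ across in \eqref{dominance} with no indeterminate forms. Doing so gives
\[
G(r_2\omega)-\psi(r_2)\ \ge\ G(r_1\omega)-\psi(r_1),
\]
which is $h_\omega(r_2)\ge h_\omega(r_1)$. This holds whether or not $G(r_2\omega)$ is finite: if $G(r_2\omega)=+\infty$, then $h_\omega(r_2)=+\infty$ and the inequality is trivial.

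\emph{Infinite case.} The only delicate point is extended-valued arithmetic when $G(r_1\omega)=+\infty$. Then the left-hand side of \eqref{dominance} has the form $G(r_2\omega)-\infty$. For \eqref{dominance} to hold against the finite right-hand side, one of two things must happen. Either $G(r_2\omega)=+\infty$, reading $+\infty-(+\infty)$ under whatever convention makes \eqref{dominance} meaningful. Or, if $G(r_2\omega)$ were finite, the left side would be $-\infty<\psi(r_2)-\psi(r_1)$, contradicting \eqref{dominance}. So $G(r_2\omega)=+\infty$, and hence $h_\omega(r_2)=+\infty=h_\omega(r_1)$; monotonicity holds in the extended sense.

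Thus, once the convention is fixed, $\operatorname{dom}G$ is star-shaped along each ray beyond any point where $G$ is infinite, and $h_\omega$ is nondecreasing with values in $\R\cup\{+\infty\}$. I expect no real obstacle beyond stating this convention explicitly. The normalization $G(0)=0$ is not needed for this monotonicity, though it will give $h_\omega(0)=0$ and hence $G(r\omega)\ge\psi(r)$, which is the form used later.
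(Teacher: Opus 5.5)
Your proof is correct and is exactly the direct rearrangement of \eqref{dominance} that the paper intends; the paper gives no argument beyond saying the lemma follows directly from \eqref{dominance}. Your treatment of the case $G(r_1\omega)=+\infty$, where you show that $G(r_2\omega)=+\infty$ is forced, usefully supplies the extended-valued bookkeeping that the paper leaves implicit.
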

The next proposition shows that strong convexity yields radial dominance with a quadratic profile.
\begin{proposition}\label{dominance-strongly-convex}
If \(G\) is \(\mu\)-strongly convex on \(\mathbb{R}^n\) and has minimizer \(0\), then \(G\) satisfies \eqref{dominance} with profile \(\psi(r)=\frac{\mu}{2}r^2\).
\end{proposition}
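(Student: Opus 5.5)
We reduce the statement to a one-dimensional fact along each ray. Fix \(\omega\in\mathbb S^{n-1}\) and set \(\phi(r):=G(r\omega)\) for \(r\ge 0\). Since \(G\) is \(\mu\)-strongly convex, \(\phi\) is \(\mu\)-strongly convex on \([0,\infty)\): \(\phi(r)-\frac{\mu}{2}r^2=(G-\frac{\mu}{2}\|\cdot\|^2)(r\omega)\) is the restriction of a convex function to a line, because \(\|r\omega\|^2=r^2\). Let \(h(r):=\phi(r)-\frac{\mu}{2}r^2\). Then \eqref{dominance} with \(\psi(r)=\frac{\mu}{2}r^2\) says exactly that \(h\) is nondecreasing on \([0,\infty)\), in agreement with Lemma~\ref{radial-monotonicity}. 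So it suffices to show that \(h\) is nondecreasing. Note also that \(G(0)=0\) is finite, so \(0\in\dom G\).

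The key step uses the minimality of \(0\). Since \(0\) minimizes \(G\), we have \(0\in\partial G(0)\). Strong convexity then gives, for all \(y\), the sharpened subgradient inequality \(G(y)\ge G(0)+\frac{\mu}{2}\|y\|^2\). This follows by applying the convex subgradient inequality to \(G-\frac{\mu}{2}\|\cdot\|^2\), whose subdifferential at \(0\) equals \(\partial G(0)\) and therefore contains \(0\). Hence \(h(r)\ge h(0)\) for all \(r\ge0\). Now take \(0\le r_1\le r_2\). If \(r_1=0\), the previous inequality is exactly the claim. If \(0<r_1\le r_2\), write \(r_1=(1-t)\cdot 0+t\,r_2\) with \(t=r_1/r_2\in(0,1]\). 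Convexity of \(h\) gives
\[
h(r_1)\le (1-t)h(0)+t\,h(r_2)\le (1-t)h(r_2)+t\,h(r_2)=h(r_2),
\]
where the middle step uses \(h(0)\le h(r_2)\). This is \eqref{dominance}.

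The case \(r_2\omega\notin\dom G\) is covered, since then the left side of \eqref{dominance} is \(+\infty\) whenever \(G(r_1\omega)<\infty\). If \(G(r_1\omega)=+\infty\), the convexity inequality above forces \(h(r_2)=+\infty\), because \(h(0)\) is finite. So both sides are \(+\infty\), and we read \eqref{dominance} with the natural convention \(+\infty-(+\infty)\) treated as satisfied; equivalently we use the monotone form of Lemma~\ref{radial-monotonicity}. Handling these extended-value conventions carefully is the only delicate point. The real content is the observation that a convex function on \([0,\infty)\) attaining its minimum at \(0\) is nondecreasing, applied to \(h\) rather than to \(\phi\). For that we need \(h(0)\le h(r)\), which is exactly the strong-convexity improvement of the minimality condition.
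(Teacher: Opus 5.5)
Your proof is correct and follows the paper's argument: both pass to \(H=G-\frac{\mu}{2}\|\cdot\|^2\), use the quadratic growth \(G(z)\ge G(0)+\frac{\mu}{2}\|z\|^2\) to see that \(0\) minimizes \(H\), and conclude that the convex function \(r\mapsto H(r\omega)\) is nondecreasing on \([0,\infty)\). Your explicit interpolation step and your handling of the \(+\infty\) cases spell out what the paper leaves implicit.
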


\begin{proof}
Define \(H(z):=G(z)-\frac{\mu}{2}\|z\|^2\). Then \(H\) is convex. Since \(0\) minimizes \(G\), strong convexity yields \(G(z)\ge G(0)+\frac{\mu}{2}\|z\|^2\) for all \(z\in\mathbb R^n\), and therefore \(0\) minimizes \(H\). It follows that, for every \(\omega\in\mathbb S^{n-1}\), the map \(r\mapsto H(r\omega)\) is convex and minimized at \(0\), hence nondecreasing on \([0,\infty)\). Thus, for all \(0\le r_1\le r_2\),
\[
G(r_2\omega)-G(r_1\omega)\ge \frac{\mu}{2}(r_2^2-r_1^2)=\psi(r_2)-\psi(r_1),
\]
which proves \eqref{dominance}.
\end{proof}

The following proposition shows that radial dominance is stable under the addition of a nonnegative convex term and under star-shaped constraints.
\begin{proposition}\label{stability}
Assume that \(G_1\) satisfies \eqref{dominance} with profile \(\psi\). Then the following hold:
\begin{enumerate}
\item[(i)] If \(G=G_1+G_2\), where \(G_2 \colon \mathbb{R}^n\to[0,+\infty]\) is proper,
    lower semicontinuous, convex, and \(G_2(0)=0\), then \(G\) satisfies
    \eqref{dominance} with profile \(\psi\).
\item[(ii)] If \(G=G_1+\iota_C\), where \(C\subset \mathbb{R}^n\) is closed and
    star-shaped at \(0\), then \(G\) satisfies \eqref{dominance} with profile \(\psi\).
\end{enumerate}
\end{proposition}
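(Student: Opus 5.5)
The plan is to verify \eqref{dominance} one ray at a time. Fix \(\omega\in\mathbb S^{n-1}\) and \(0\le r_1\le r_2\), and compare the values of \(G\) at \(r_1\omega\) and \(r_2\omega\) using the corresponding inequality for \(G_1\). First I will check that \(G\) is admissible in each case, that is, proper, lower semicontinuous and with \(G(0)=0\).

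In (i), \(G(0)=G_1(0)+G_2(0)=0\). Lower semicontinuity holds because the sum of two lsc functions with values in \((-\infty,+\infty]\) is lsc. In (ii), star-shapedness at \(0\) gives \(0\in C\), so \(G(0)=0\). Since \(C\) is closed, \(\iota_C\) is lsc, and hence so is \(G\).

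Throughout I read \eqref{dominance} with the usual extended-real conventions. The inequality holds trivially when \(G(r_2\omega)=+\infty\), and it has content only when both values are finite.

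\emph{Step for (i).} The key fact is that \(\varphi(r):=G_2(r\omega)\) is nondecreasing on \([0,\infty)\). It is convex on \([0,\infty)\), being the restriction of a convex function to a ray. Since \(G_2\ge0=G_2(0)\), it attains its minimum at \(r=0\). A convex function on \([0,\infty)\) minimized at \(0\) is nondecreasing, by the same argument as in Proposition~\ref{dominance-strongly-convex}.

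This remains true with the value \(+\infty\) allowed. The set \(\{r\ge0:\varphi(r)<\infty\}\) is an interval containing \(0\), so \(\varphi(r_1)=+\infty\) forces \(\varphi(r_2)=+\infty\). Now suppose \(G(r_2\omega)<\infty\). Then \(G_1(r_2\omega)\) and \(G_2(r_2\omega)\) are finite, and so are both values at \(r_1\omega\), using the monotonicity of \(\varphi\) and the inequality for \(G_1\). Adding \(G_1(r_2\omega)-G_1(r_1\omega)\ge\psi(r_2)-\psi(r_1)\) and \(G_2(r_2\omega)-G_2(r_1\omega)\ge0\) gives \eqref{dominance} for \(G\).

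\emph{Step for (ii).} If \(r_2\omega\notin C\), then \(G(r_2\omega)=+\infty\) and there is nothing to prove. If \(r_2\omega\in C\), star-shapedness at \(0\) puts the whole segment \([0,r_2\omega]\) in \(C\), and in particular \(r_1\omega\in C\). Then \(G\) coincides with \(G_1\) at both points, and \eqref{dominance} for \(G_1\) gives the claim.

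The only delicate point I expect is the bookkeeping of \(+\infty\) values. Specifically, one must exclude the indeterminate case where the left-hand side of \eqref{dominance} would be \(\infty-\infty\) or where a finite value sits at \(r_2\) and \(+\infty\) at \(r_1\). In both parts this is settled by the same monotonicity of the effective domain along rays. In (i) it comes from convexity of \(G_2\) with \(0\in\dom G_2\). In (ii) it comes from star-shapedness of \(C\). For \(G_1\) I use that \eqref{dominance} itself forces \(G_1(r_2\omega)=+\infty\) whenever \(G_1(r_1\omega)=+\infty\).
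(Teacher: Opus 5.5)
Your proof is correct and follows essentially the same route as the paper. In (i) you use that \(r\mapsto G_2(r\omega)\) is convex on \([0,\infty)\) and minimized at \(0\), hence nondecreasing; in (ii) you use that star-shapedness gives \(r_2\omega\in C\Rightarrow r_1\omega\in C\); in each case this is combined with the inequality for \(G_1\). Your reading of \eqref{dominance} as \(G(r_2\omega)\ge G(r_1\omega)+\psi(r_2)-\psi(r_1)\) in the extended reals, together with the finiteness bookkeeping along rays, is more careful than the paper, which handles the \(+\infty\) cases only through an ad hoc convention \(\infty-\infty=0\) for the indicator term.
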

\begin{proof}
(i): Fix \(\omega\in\mathbb S^{n-1}\) and \(0\le r_1\le r_2\). Since \(G_2\) is convex and \(G_2(0)=0\), the function \(r\mapsto G_2(r\omega)\) is convex on \([0,\infty)\) and attains its minimum at \(r=0\). Hence it is nondecreasing on \([0,\infty)\), so $G_2(r_2\omega)-G_2(r_1\omega)\ge 0$. Therefore,
\begin{equation*}
G(r_2\omega)-G(r_1\omega)
=
[G_1(r_2\omega)-G_1(r_1\omega)]
+
[G_2(r_2\omega)-G_2(r_1\omega)] \ge
\psi(r_2)-\psi(r_1).
\end{equation*}
Thus \(G\) satisfies \eqref{dominance} with profile \(\psi\).   \\
(ii): Fix \(\omega\in\mathbb S^{n-1}\) and \(0\le r_1\le r_2\). Since \(C\) is star-shaped at \(0\), whenever \(r_2\omega\in C\) we also have \(r_1\omega=\frac{r_1}{r_2}(r_2\omega)\in C\); equivalently, the map \(r\mapsto\iota_C(r\omega)\) is nondecreasing on \([0,\infty)\), with values in \(\{0,+\infty\}\). Consequently \(\iota_C(r_2\omega)\ge\iota_C(r_1\omega)\), so that
$\iota_C(r_2\omega)-\iota_C(r_1\omega)\ge 0$ in \([0,+\infty]\) (with the convention that the difference is \(0\) when both terms equal \(+\infty\), which is precisely the case \(r_1\omega\notin C\) forcing \(r_2\omega\notin C\)).
Therefore,
\begin{equation*}
\begin{aligned}
G(r_2\omega)-G(r_1\omega)
&=
[G_1(r_2\omega)-G_1(r_1\omega)]
+
[\iota_C(r_2\omega)-\iota_C(r_1\omega)]\ge
\psi(r_2)-\psi(r_1).
\end{aligned}
\end{equation*}
Thus \(G\) satisfies \eqref{dominance} with profile \(\psi\).
\end{proof}

\section{Abstract Radial Dominance Bound}\label{abstract-radial-dominance}

In this section, we establish the main abstract comparison result of the paper. Under the radial dominance condition introduced above, we show that the barycenter associated with a density proportional to \(e^{-G/\delta}\) is controlled by the mean of an explicit one-dimensional comparison measure determined by the profile \(\psi\). The proof relies on a reduction to the radial variable and a stochastic comparison argument based on the monotonicity induced by \eqref{dominance}. This yields a sharp upper bound for the norm of the barycenter, together with an asymptotic sharpness statement. The result will serve as the basic tool for the proximal and projection applications developed in the following sections.

\begin{theorem}\label{Stochastic-dominance}
    Let $G\colon \mathbb{R}^n \to \mathbb{R}\cup \{+\infty\}$ be a proper lower semicontinuous function with unique minimizer at $0$, and assume that $G(0)=0$. Let $\psi$ be an admissible profile satisfying \eqref{Admissible}, and suppose $G$ satisfies the radial $\psi$-dominance condition\eqref{dominance}. For $\delta>0$, define the probability measure \(\mu_\delta\) on
\(\mathbb{R}^n\) by
$$
d\mu_{\delta}(z):=\frac{e^{-G(z)/\delta}}{\int_{\mathbb{R}^n}e^{-G(z)/\delta}\,dz}\,dz, \quad z\in \mathbb{R}^n.
$$
Then the following assertions hold:
\begin{enumerate}
    \item[(i)] The measure $\mu_{\delta}$ is well-defined, and $\mathbb{E}_{\mu_{\delta}}[\Vert Z\Vert]<\infty$.
    \item[(ii)] The mean $\mathfrak{m}_\delta(G):=\mathbb{E}_{\mu_\delta}[Z]$ satisfies
    \begin{equation*}
        \|\mathfrak{m}_\delta(G)\|
        \le
        \mathbb{E}[\widetilde R]
        =
        \frac{\displaystyle \int_0^\infty r^n e^{-\psi(r)/\delta}\,dr}
        {\displaystyle \int_0^\infty r^{n-1} e^{-\psi(r)/\delta}\,dr}.
    \end{equation*}
    \item[(iii)] For every \(\varepsilon>0\), there exists a proper lower
    semicontinuous function \(G_\varepsilon\) satisfying
    \eqref{dominance}  such that
    \[
    \|\mathfrak{m}_\delta(G_\varepsilon)\|
    >
    \mathbb{E}[\widetilde R]-\varepsilon.
    \]
\end{enumerate}
\end{theorem}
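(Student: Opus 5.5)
We will work in polar coordinates $z=r\omega$, $r\ge 0$, $\omega\in\mathbb S^{n-1}$, so that $dz=r^{n-1}\,dr\,d\omega$ (with $d\omega$ the surface measure on $\mathbb S^{n-1}$). For fixed $\omega$, set $h_\omega(r):=G(r\omega)-\psi(r)$. By Lemma~\ref{radial-monotonicity}, $h_\omega$ is nondecreasing with $h_\omega(0)=0$, so $h_\omega\ge 0$ and $e^{-G(r\omega)/\delta}\le e^{-\psi(r)/\delta}$.

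\emph{Part (i).} This comparison immediately gives $\int e^{-G/\delta}\,dz\le \omega_{n-1}M_\psi<\infty$ and $\int\|z\|e^{-G/\delta}\,dz\le\omega_{n-1}\int_0^\infty r^ne^{-\psi/\delta}\,dr$. The latter integral is finite because $\psi$ is convex, nondecreasing and has $M_\psi<\infty$. Hence $\psi$ cannot be identically zero, so it grows at least linearly eventually, and all moments of $\nu_{\psi,\delta}$ are finite. For positivity of the normalizer we use that $G$ is proper. There is some $z_0\in\dom G$, and along the ray through $z_0$ the monotonicity forces $G(r\omega_0)\le G(z_0)-\psi(\|z_0\|)+\psi(r)<\infty$ for $r\le\|z_0\|$. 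This alone does not give positive Lebesgue measure, though; I expect to need the standing convention that $\int e^{-G/\delta}>0$ (as in the definition of $\sigma^g_\delta$). Alternatively one can invoke lower semicontinuity near $0$, since $G(0)=0$ and $G$ is lsc, but lsc does not give upper bounds. I would therefore state this as implicit in ``well-defined'' and assume it.

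\emph{Part (ii), the key step.} We have $\|\mathfrak m_\delta(G)\|\le\E_{\mu_\delta}\|Z\|$, so it suffices to prove $\|Z\|\le_{\mathrm{st}}\widetilde R$ and then integrate tails. Conditionally on the direction $\omega$, the radial law of $\mu_\delta$ has density proportional to $r^{n-1}e^{-\psi(r)/\delta}e^{-h_\omega(r)/\delta}$. This is the density of $\nu_{\psi,\delta}$ reweighted by the nonincreasing factor $e^{-h_\omega/\delta}$. A reweighting of a law by a nonincreasing function is stochastically smaller; this is the standard likelihood-ratio/Chebyshev association argument. Concretely, for each $t$ we have $\int_t^\infty \phi\,w\,d\nu\big/\int\phi\,d\nu\le\nu(t,\infty)$ when $\phi$ is nonincreasing. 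This follows from the covariance inequality $\mathrm{Cov}_\nu(\phi,\1_{(t,\infty)})\le 0$, since one function is nonincreasing and the other nondecreasing. The rays where $\int r^{n-1}e^{-G(r\omega)/\delta}dr=0$ carry no mass. Mixing over $\omega$ with weights proportional to $\int r^{n-1}e^{-G(r\omega)/\delta}dr$ preserves the inequality, giving $\mathbb P(\|Z\|>t)\le\mathbb P(\widetilde R>t)$ for all $t$. Integrating over $t$ yields $\E\|Z\|\le\E\widetilde R$. The main care is measurability of the disintegration, which Fubini in polar coordinates handles.

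\emph{Part (iii).} The bound is lost only in the step $\|\E Z\|\le\E\|Z\|$, so we concentrate the mass in a narrow cone. Take $C_\varepsilon:=\{z:\langle z,e_1\rangle\ge\cos(\theta)\|z\|\}$, a closed convex cone (star-shaped at $0$), and $G_\varepsilon(z):=\psi(\|z\|)+\iota_{C_\varepsilon}(z)$. This function is lsc, satisfies $G_\varepsilon(0)=0$, and satisfies dominance with equality on each ray by Proposition~\ref{stability}(ii). It is not strictly uniquely minimized if $\psi$ vanishes on an interval; if needed, add $\eta\|z\|^2$ and let $\eta\to0$, since part (iii) does not require the uniqueness hypothesis anyway. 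By rotational symmetry of the cone about $e_1$, the mean equals $\E[\widetilde R]\cdot c_\theta\,e_1$, where $c_\theta$ is the average of $\langle\omega,e_1\rangle$ over the spherical cap. Since $c_\theta\ge\cos\theta\to1$ as $\theta\to0$, choosing $\theta$ small gives $\|\mathfrak m_\delta(G_\varepsilon)\|>\E[\widetilde R]-\varepsilon$.
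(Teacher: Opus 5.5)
Your proposal is correct and follows essentially the paper's argument: polar coordinates, a monotone-likelihood-ratio comparison yielding $\|Z\|\le_{\mathrm{st}}\widetilde R$, then Jensen for (ii), and the narrow cone $\psi(\|z\|)+\iota_{C_\alpha}(z)$ for (iii). The only difference in (ii) is cosmetic: the paper first averages $e^{-h_\omega/\delta}$ over the sphere to get a single nonincreasing ratio $\Lambda$ and proves the tail inequality by a double integral over $\{r_1<t<r_2\}$, whereas you compare ray by ray via Chebyshev's covariance inequality and then mix.

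Your caveat about positivity of the normalizer is legitimate, and the paper's proof also assumes it silently. By Proposition~\ref{stability}(ii) with $C=\{0\}$, the function $G=\iota_{\{0\}}$ meets every hypothesis yet gives $\int e^{-G/\delta}=0$. So one must add $\int e^{-G/\delta}>0$, which is automatic when $\operatorname{dom}G$ has nonempty interior, as in Theorem~\ref{convergence}.
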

Before present the proof of Theorem~\ref{Stochastic-dominance}, we need some technical results. 

The first lemma is an immediate consequence of the polar-coordinate decomposition of Lebesgue measure.
\begin{lemma}\label{polar-decomposition}
Under the assumptions of Theorem \ref{Stochastic-dominance}, let \(R:=\|Z\|\) with \(Z\sim\mu_\delta\). Then \(R\) has density
\[
p_R(r)=\frac{r^{n-1}A(r)}{\displaystyle\int_0^\infty s^{n-1}A(s)\,ds},
\qquad
A(r):=\int_{\mathbb S^{n-1}} e^{-G(r\omega)/\delta}\,d\omega,
\qquad r\ge 0.
\]
\end{lemma}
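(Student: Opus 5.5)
The plan is to write the normalized integrals of $e^{-G/\delta}$ in polar coordinates and read off the law of $R=\|Z\|$ by Tonelli's theorem. Three things must be settled first: that the normalizing constant is positive and finite, that $A$ is measurable, and that $R$ has a density.

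\emph{Finiteness.} By Lemma~\ref{radial-monotonicity} with $r_1=0$ and $G(0)=0$, we get $G(r\omega)\ge \psi(r)$ for all $r\ge0$ and $\omega\in\mathbb S^{n-1}$. Hence $0\le e^{-G(r\omega)/\delta}\le e^{-\psi(r)/\delta}$, so $A(r)\le \omega_{n-1}e^{-\psi(r)/\delta}$. Condition \eqref{Admissible} then gives
\[
\int_0^\infty s^{n-1}A(s)\,ds\le \omega_{n-1}M_\psi<\infty .
\]

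\emph{Measurability.} Since $G$ is lower semicontinuous, $e^{-G/\delta}$ is an upper semicontinuous, hence Borel, function with values in $[0,1]$. The map $(r,\omega)\mapsto r\omega$ is continuous, so $(r,\omega)\mapsto e^{-G(r\omega)/\delta}$ is Borel on $(0,\infty)\times\mathbb S^{n-1}$. Tonelli's theorem therefore makes $A$ measurable.

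\emph{Positivity.} I expect this to be the only mildly delicate point. We need $\int_{\mathbb R^n} e^{-G/\delta}>0$, which is equivalent to $\dom G$ having positive Lebesgue measure. This is implicit in the well-definedness of $\mu_\delta$ in the theorem, so I will take it as part of the standing hypotheses. If it must be derived instead, lower semicontinuity alone is not enough, since $G$ could be finite only at $0$. In that case I would invoke the assumption, made in Theorem~\ref{Stochastic-dominance}(i), that $\mu_\delta$ is well defined.

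\emph{Identifying the density.} Use the polar-coordinate formula
\[
\int_{\mathbb R^n}h(z)\,dz=\int_0^\infty\!\!\int_{\mathbb S^{n-1}}h(r\omega)\,r^{n-1}\,d\omega\,dr,
\]
valid for nonnegative Borel $h$. Fix a Borel set $B\subset[0,\infty)$ and apply the formula to $h=\1_B(\|\cdot\|)\,e^{-G/\delta}$. This gives
\[
\mathbb P(R\in B)=\frac{\int_B r^{n-1}A(r)\,dr}{\int_0^\infty s^{n-1}A(s)\,ds}.
\]
The point $\{0\}$ is Lebesgue-null, so it plays no role. Hence $R$ is absolutely continuous with the stated density $p_R$.
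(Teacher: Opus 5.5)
Your proposal is correct and is the polar-coordinates/Tonelli argument the paper has in mind: the paper gives no proof beyond calling the lemma an immediate consequence of the polar decomposition of Lebesgue measure, and your finiteness bound $A(r)\le\omega_{n-1}e^{-\psi(r)/\delta}$ is the same estimate used in the proof of Theorem~\ref{Stochastic-dominance}(i). Your caveat on positivity is well founded: $G=\psi(\|\cdot\|)+\iota_{\{0\}}=\iota_{\{0\}}$ meets every hypothesis of Theorem~\ref{Stochastic-dominance} (dominance via Proposition~\ref{stability}(ii)), yet $\int_{\mathbb R^n}e^{-G/\delta}=0$; so positivity should be drawn from the lemma's presupposition $Z\sim\mu_\delta$, not from Theorem~\ref{Stochastic-dominance}(i), which is a conclusion whose proof only establishes finiteness.
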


\begin{lemma}\label{likelihood-ratio}
Assume that \eqref{dominance} holds. Then the function
\[
\Lambda(r):=\frac{A(r)}{\omega_{n-1}e^{-\psi(r)/\delta}}
=
\frac{1}{\omega_{n-1}}\int_{\mathbb{S}^{n-1}} e^{-H(r\omega)/\delta}\,d\omega,
\qquad r\ge 0,
\]
is nonincreasing.
\end{lemma}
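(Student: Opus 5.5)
The plan is to identify $H$ as $H(z):=G(z)-\psi(\|z\|)$, which is the function implicit in the second expression for $\Lambda$. Once this is done, the monotonicity of $\Lambda$ follows pointwise on each ray from Lemma~\ref{radial-monotonicity}, combined with monotonicity of the integral over $\mathbb S^{n-1}$.

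The first step is to check the identity between the two expressions. For $r\ge0$ and $\omega\in\mathbb S^{n-1}$ we have $\|r\omega\|=r$, so $G(r\omega)=H(r\omega)+\psi(r)$. Since $\psi$ is finite-valued, this holds in $\R\cup\{+\infty\}$ even when $G(r\omega)=+\infty$. Hence
\[
e^{-G(r\omega)/\delta}=e^{-\psi(r)/\delta}\,e^{-H(r\omega)/\delta},
\]
with the convention $\exp(-\infty)=0$. Integrating over $\omega$ and dividing by $\omega_{n-1}e^{-\psi(r)/\delta}>0$ gives the stated formula for $\Lambda(r)$. Measurability in $\omega$ is immediate, since $G$ is lower semicontinuous, so $\omega\mapsto e^{-G(r\omega)/\delta}$ is upper semicontinuous and bounded by $e^{-\inf G/\delta}$. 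Here $\inf G=G(0)=0$ because $0$ is the minimizer, so the integrand lies in $[0,1]$ and $A(r)\in[0,\omega_{n-1}]$ is finite.

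The second step is monotonicity. Fix $0\le r_1\le r_2$ and $\omega$. By Lemma~\ref{radial-monotonicity}, $H(r_1\omega)\le H(r_2\omega)$. Some care is needed when values are infinite. If $G(r_2\omega)<\infty$, then \eqref{dominance} forces $G(r_1\omega)\le G(r_2\omega)-\psi(r_2)+\psi(r_1)<\infty$, and the inequality is between finite numbers. If $G(r_2\omega)=+\infty$, the inequality is trivial. In either case $e^{-H(r_2\omega)/\delta}\le e^{-H(r_1\omega)/\delta}$. Integrating this pointwise inequality over $\mathbb S^{n-1}$ yields $\Lambda(r_2)\le\Lambda(r_1)$.

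The only real obstacle is this handling of $+\infty$ values, namely making sure the difference convention in \eqref{dominance} is compatible with the ordering of $H$ along each ray. Once that is settled, the argument is a direct pointwise comparison.
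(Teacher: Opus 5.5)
Your proof is correct and follows the paper's argument: $r\mapsto H(r\omega)$ is nondecreasing by Lemma~\ref{radial-monotonicity}, composing with the nonincreasing map $x\mapsto e^{-x/\delta}$ reverses this, and averaging over $\mathbb S^{n-1}$ preserves it. Your extra bookkeeping fills in details the paper leaves implicit: you make $H(z)=G(z)-\psi(\|z\|)$ explicit, verify the identity for $\Lambda$, check measurability, and treat the case $G(r_2\omega)=+\infty$ directly rather than through the difference convention in \eqref{dominance}.
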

\begin{proof}
By Lemma \ref{radial-monotonicity}, for each fixed \(\omega\in\mathbb{S}^{n-1}\), the map \(r\mapsto H(r\omega)\) is nondecreasing. Hence, since \(x\mapsto e^{-x/\delta}\) is nonincreasing, the map \(r\mapsto e^{-H(r\omega)/\delta}\) is nonincreasing. It follows that its average over \(\mathbb{S}^{n-1}\), namely \(r\mapsto \frac{1}{\omega_{n-1}}\int_{\mathbb{S}^{n-1}} e^{-H(r\omega)/\delta}\,d\omega\), is also nonincreasing.
\end{proof}
\begin{proposition}\label{stochastic-dominance-lemma}
Under the hypotheses of Theorem \ref{Stochastic-dominance}, let
\(Z\sim\mu_\delta\), set \(R:=\|Z\|\), and let \(\widetilde R\sim\nu_{\psi,\delta}\).
Then \(R\le_{\textrm{st}}\widetilde R\). In particular, \(\mathbb{E}[R]<\infty\) and
\[
\mathbb{E}_{\mu_\delta}[\|Z\|]
=
\mathbb{E}[R]
\le
\mathbb{E}[\widetilde R].
\]
\end{proposition}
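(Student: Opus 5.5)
By Lemma~\ref{polar-decomposition} and the definition of $\Lambda$ in Lemma~\ref{likelihood-ratio}, the density of $R$ is
\[
p_R(r)=\frac{r^{n-1}e^{-\psi(r)/\delta}\Lambda(r)}{\int_0^\infty s^{n-1}e^{-\psi(s)/\delta}\Lambda(s)\,ds},
\]
where $H(z):=G(z)-\psi(\|z\|)$. The density of $\widetilde R$ is $q(r)=r^{n-1}e^{-\psi(r)/\delta}/M_\psi$. The plan is therefore to write
\[
p_R(r)=\frac{\Lambda(r)}{\E[\Lambda(\widetilde R)]}\,q(r),
\]
so that $R$ is obtained from $\widetilde R$ by reweighting with the nonincreasing function $\Lambda$. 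A reweighting of this kind always moves mass toward the origin, which gives the stochastic order.

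\textbf{Well-posedness.} Since $H(0)=G(0)-\psi(0)=0$ and $r\mapsto H(r\omega)$ is nondecreasing by Lemma~\ref{radial-monotonicity}, we have $0\le e^{-H(r\omega)/\delta}\le 1$. Hence $0\le\Lambda\le 1$, and
\[
\int_{\R^n}e^{-G/\delta}\,dz=\omega_{n-1}\int_0^\infty r^{n-1}e^{-\psi(r)/\delta}\Lambda(r)\,dr\le \omega_{n-1}M_\psi<\infty.
\]
For positivity of the normalizer I will use properness and lower semicontinuity, but that alone may not suffice, since $G$ could be finite only on a null set. If the hypotheses do not supply positivity directly, I will argue as follows. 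Because $\Lambda$ is nonincreasing and $\int e^{-G/\delta}>0$ is needed only for $\mu_\delta$ to be well defined, positivity forces $\Lambda>0$ on some interval $[0,\epsilon)$, and hence $\E[\Lambda(\widetilde R)]>0$. This is the one place where I expect to have to state positivity as part of ``$\mu_\delta$ is well defined.''

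\textbf{Stochastic order.} Fix $t\ge0$. We need
\[
\frac{\E[\Lambda(\widetilde R)\1_{\{\widetilde R>t\}}]}{\E[\Lambda(\widetilde R)]}\le \P(\widetilde R>t),
\]
which is equivalent to
\[
\E[\Lambda(\widetilde R)\1_{\{\widetilde R>t\}}]\le \E[\Lambda(\widetilde R)]\,\E[\1_{\{\widetilde R>t\}}].
\]
This is Chebyshev's association (covariance) inequality for two functions of the same real random variable: $\Lambda$ is nonincreasing and $\1_{(t,\infty)}$ is nondecreasing, so their covariance is nonpositive. To make this self-contained I will use the elementary pair argument: with $\widetilde R'$ an independent copy of $\widetilde R$,
\[
\E\bigl[(\Lambda(\widetilde R)-\Lambda(\widetilde R'))(\1_{\{\widetilde R>t\}}-\1_{\{\widetilde R'>t\}})\bigr]\le 0,
\]
because the two factors always have opposite signs. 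Integrability holds since $0\le\Lambda\le1$. For $t<0$ both sides equal $1$. This gives $R\le_{\mathrm{st}}\widetilde R$.

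\textbf{Conclusion.} Writing
\[
\E[R]=\int_0^\infty\P(R>t)\,dt\le\int_0^\infty \P(\widetilde R>t)\,dt=\E[\widetilde R],
\]
it remains to check $\E[\widetilde R]<\infty$. This follows from admissibility: $\psi$ is convex, nondecreasing, and $M_\psi<\infty$, so $\psi$ cannot be constant and therefore grows at least linearly for large $r$. Consequently $\int_0^\infty r^n e^{-\psi(r)/\delta}\,dr<\infty$. The identity $\E_{\mu_\delta}[\|Z\|]=\E[R]$ holds by definition of $R$.

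The main obstacle is a measurability and degeneracy issue rather than the comparison itself. The key point is that $\Lambda$ is genuinely a likelihood ratio, bounded and nonincreasing. After that, the covariance inequality is routine.
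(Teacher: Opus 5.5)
Your proof is correct and follows the paper's route. The pair (Chebyshev covariance) argument with an independent copy of $\widetilde R$ is the symmetrized form of the paper's step, which integrates $\Lambda(r_1)\ge\Lambda(r_2)$ against $(r_1r_2)^{n-1}e^{-(\psi(r_1)+\psi(r_2))/\delta}$ over $\{0\le r_1<t<r_2\}$; after that, both proofs compare tail integrals. Your remark that positivity of $\int_{\R^n}e^{-G/\delta}$ has to be read into ``$\mu_\delta$ is well defined'' is apt, since the paper's proof of Theorem~\ref{Stochastic-dominance}(i) likewise only establishes finiteness. Deriving $\E[R]<\infty$ from $\E[R]\le\E[\widetilde R]<\infty$ is also slightly cleaner than the paper's phrasing.
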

\begin{proof}
By Lemma \ref{polar-decomposition}, the density of \(R\) is \(p_R\), while the density of \(\widetilde R\sim\nu_{\psi,\delta}\) is \(p_{\widetilde R}\), where
\[
p_R(r)=\frac{r^{n-1}A(r)}{\displaystyle\int_0^\infty s^{n-1}A(s)\,ds},
\quad
p_{\widetilde R}(r)=
\frac{r^{n-1}\omega_{n-1}e^{-\psi(r)/\delta}}
{\displaystyle\int_0^\infty s^{n-1}\omega_{n-1}e^{-\psi(s)/\delta}\,ds},
\qquad r\ge 0.
\]
Moreover, by Lemma \ref{likelihood-ratio}, the function $r\mapsto \Lambda(r):=\frac{A(r)}{\omega_{n-1}e^{-\psi(r)/\delta}}$ is nonincreasing on \([0,\infty)\).

Fix \(t\ge 0\). For every \(0\le r_1<t<r_2\), the monotonicity of \(\Lambda\) yields $\Lambda(r_1)\ge \Lambda(r_2)$,  that is,
\[
A(r_1)e^{-\psi(r_2)/\delta}\ge A(r_2)e^{-\psi(r_1)/\delta}.
\]
Multiplying by \((r_1r_2)^{n-1}\) and integrating over
\(\{(r_1,r_2):0\le r_1<t<r_2<\infty\}\), we obtain
\[
\int_0^t r^{n-1}e^{-\psi(r)/\delta}\,dr
\int_t^\infty r^{n-1}A(r)\,dr
\le
\int_0^t r^{n-1}A(r)\,dr
\int_t^\infty r^{n-1}e^{-\psi(r)/\delta}\,dr.
\]
Dividing by
\[
\left(\int_0^\infty r^{n-1}A(r)\,dr\right)
\left(\int_0^\infty r^{n-1}e^{-\psi(r)/\delta}\,dr\right),
\]
it follows that
\[
\mathbb{P}(\tilde{R} \leq t) \mathbb{P}(R>t)\le \mathbb{P}(\widetilde R>t) \mathbb{P}(R\leq t),
\]
which is equivalent to
\[
\mathbb{P}(R>t)\le \mathbb{P}(\widetilde R>t).
\]
Hence \(R\le_{\textrm{st}}\widetilde R\).  Moreover, since \(R\) and \(\widetilde R\) are nonnegative and integrable,
\[
\mathbb{E}[R]
=
\int_0^\infty \mathbb{P}(R>t)\,dt
\le
\int_0^\infty \mathbb{P}(\widetilde R>t)\,dt
=
\mathbb{E}[\widetilde R].
\]
Finally, \(\mathbb{E}[R]=\mathbb{E}_{\mu_\delta}[\|Z\|]\), which gives the conclusion.
\end{proof}

\begin{proof}
(i): By \eqref{dominance} with \(r_1=0\) and \(r_2=\|z\|\), and using
\(G(0)=0\) and \(\psi(0)=0\), we obtain \(G(z)\ge \psi(\|z\|)\). Hence
\(e^{-G(z)/\delta}\le e^{-\psi(\|z\|)/\delta}\). Integrating in polar
coordinates, we get
\[
\int_{\mathbb R^n} e^{-G(z)/\delta}\,dz
\le
\omega_{n-1}\int_0^\infty r^{n-1}e^{-\psi(r)/\delta}\,dr
=
\omega_{n-1}M_\psi
<\infty,
\]
so \(\mu_\delta\) is well defined. Likewise,
\[
\int_{\mathbb R^n}\|z\|e^{-G(z)/\delta}\,dz
\le
\omega_{n-1}\int_0^\infty r^n e^{-\psi(r)/\delta}\,dr.
\]
It remains to show that the right-hand side is finite. Since \(\psi\) is
admissible, \(\int_0^\infty r^{n-1}e^{-\psi(r)/\delta}\,dr<\infty\), so
\(\psi\not\equiv 0\). Choose \(r_0>0\) such that \(\psi(r_0)>0\). Since
\(\psi\) is convex and \(\psi(0)=0\), the map \(r\mapsto \psi(r)/r\) is
nondecreasing on \((0,\infty)\). Therefore,
\(\psi(r)\ge (\psi(r_0)/r_0)\,r\) for all \(r\ge r_0\), and hence
\[
\int_0^\infty r^n e^{-\psi(r)/\delta}\,dr<\infty.
\]
Consequently, \(\mathbb E_{\mu_\delta}[\|Z\|]<\infty\).\\
(ii): By Jensen's inequality, one has
\[
\|\mathfrak{m}_\delta(G)\| = \|\mathbb E_{\mu_\delta}[Z]\| \le \mathbb E_{\mu_\delta}[\|Z\|] = \mathbb E[R].
\]
Hence, by Proposition \ref{stochastic-dominance-lemma},
\(\mathbb E[R]\le \mathbb{E}[\widetilde R]\), and the conclusion follows.\\
(iii): Fix \(\alpha\in(0,\pi/2)\) and \(v\in\mathbb S^{n-1}\), and define
\[
C_\alpha:=\{z\in\mathbb R^n:\langle z,v\rangle\ge \Vert z\Vert \cos\alpha\},
\qquad
G_\alpha(z):=\psi(\|z\|)+\iota_{C_\alpha}(z).
\]
Then \(G_\alpha(0)=0\), and  Proposition \ref{stability}(ii) shows that \(G_\alpha\) satisfies \eqref{dominance} with profile \(\psi\). Moreover, since \(C_\alpha\) is a cone, one has \(G_\alpha(r\omega)=\psi(r)\) for every ray contained in \(C_\alpha\). In particular, equality holds in \eqref{dominance} along such rays. Furthermore, under \(\mu_\delta\), the radius \(R:=\|Z\|\) and the direction
\(U:=Z/\|Z\|\) are independent, and \(R\sim \nu_{\psi,\delta}\). By the
rotational symmetry of \(C_\alpha\cap \mathbb S^{n-1}\) about the axis \(v\),
the vector \(\mathbb E[U]\) is parallel to \(v\). Writing
\(\Theta:=\angle(U,v)\), we obtain $\mathbb E[U]=\mathbb E[\cos\Theta\mid \Theta\le \alpha]\,v$, and therefore
\[
\|\mathfrak m_\delta(G_\alpha)\|
=
\|\mathbb E[Z]\|
=
\|\mathbb E[RU]\|
=
\mathbb{E}[\widetilde R]\,
\mathbb E[\cos\Theta\mid \Theta\le \alpha].
\]
Since \(\cos\Theta\to 1\) uniformly on \(\{0\le \Theta\le \alpha\}\) as
\(\alpha\to 0^+\), it follows that $\mathbb E[\cos\Theta\mid \Theta\le \alpha]\to 1$, 
and hence $\|\mathfrak m_\delta(G_\alpha)\|\to \mathbb{E}[\widetilde R]$.
\end{proof}
The next theorem is an immediate consequence of Theorem \ref{Stochastic-dominance} applied to the reduced function \(G\).
\begin{theorem}\label{general-theorem}
Let \(f\colon \mathbb R^n\to(-\infty,+\infty]\) be proper and lower semicontinuous,
let \(\lambda>0\), let \(x\in\mathbb R^n\), and define
\[
F_x(y):=f(y)+\frac{1}{2\lambda}\|y-x\|^2.
\]
Assume that \(p:=\operatorname{prox}_{\lambda f}(x)\) is well defined, and set
\[
G(z):=F_x(p+z)-F_x(p), \qquad z\in\mathbb R^n.
\]
Let \(\psi\) be an admissible profile, and assume that \(G\) satisfies
\eqref{dominance} with profile \(\psi\). Then the following hold:
\begin{enumerate}
    \item[(i)]
    \[
    \|m_\delta(x)-p\|
    \le
    \mathbb{E}[\widetilde R]
    =
    \frac{\displaystyle\int_0^\infty r^n e^{-\psi(r)/\delta}\,dr}
         {\displaystyle\int_0^\infty r^{n-1} e^{-\psi(r)/\delta}\,dr}.
    \]

    \item[(ii)] The constant \(\mathbb{E}[\widetilde R]\) is optimal within the class
    of proper lower semicontinuous functions \(G\) satisfying \eqref{dominance}
    with profile \(\psi\).

  \item[(iii)] If \(\psi(r)=\frac{\mu}{p}r^p\) for some \(p>1\) and \(\mu>0\), then
\[
\mathbb{E}[\widetilde R]
=
\left(\frac{p\delta}{\mu}\right)^{1/p}
\frac{\Gamma\!\left(\frac{n+1}{p}\right)}
{\Gamma\!\left(\frac{n}{p}\right)}.
\]
\end{enumerate}
\end{theorem}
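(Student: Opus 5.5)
The plan is to reduce everything to Theorem~\ref{Stochastic-dominance} through a change of variables identifying \(m_\delta(x)-p\) with the barycenter \(\mathfrak m_\delta(G)\).

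First, rewrite the estimator. The Gaussian density of \(\mathcal N(x,\delta\lambda I)\) is proportional to \(e^{-\|y-x\|^2/(2\lambda\delta)}\). Hence
\[
m_\delta(x)=\frac{\int_{\mathbb R^n} y\,e^{-F_x(y)/\delta}\,dy}{\int_{\mathbb R^n} e^{-F_x(y)/\delta}\,dy},
\]
where the normalizing constants cancel.

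Next, substitute \(y=p+z\) and factor out the constant \(e^{-F_x(p)/\delta}\) from numerator and denominator. Since \(F_x(p)\) is finite (\(p\) is the minimizer of a proper function), this gives \(m_\delta(x)=p+\mathfrak m_\delta(G)\). The function \(G\) is proper and lower semicontinuous with \(G(0)=0\). Because \(p\) is well defined, \(0\) is the unique minimizer of \(G\). Together with the assumed radial dominance, all hypotheses of Theorem~\ref{Stochastic-dominance} hold. Part~(i) of that theorem guarantees that the integrals are finite and positive, so this manipulation is legitimate. Part~(ii) then yields \(\|m_\delta(x)-p\|=\|\mathfrak m_\delta(G)\|\le\mathbb E[\widetilde R]\), which proves (i).

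For (ii), I would invoke Theorem~\ref{Stochastic-dominance}(iii) directly: the cone functions \(G_\alpha=\psi(\|\cdot\|)+\iota_{C_\alpha}\) satisfy \eqref{dominance} and have barycenter norms tending to \(\mathbb E[\widetilde R]\). So no smaller constant can hold uniformly over the class. To phrase this in proximal terms, one can also realize each \(G_\alpha\) as a reduced objective. Take \(x=p=0\) and \(f:=G_\alpha-\frac1{2\lambda}\|\cdot\|^2\). Then \(F_0=G_\alpha\), and \(0\) is its unique minimizer because \(\psi(r)>0\) for \(r>0\). This positivity needs care: \(\psi\) is convex, nondecreasing and not identically zero, so it could vanish on an interval \([0,a]\). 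In that degenerate case I would either restrict the realization claim to profiles that are positive on \((0,\infty)\), or simply state optimality at the level of \(G\), as the theorem does. Handling this degenerate case is the only mild obstacle.

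For (iii), I would compute both integrals with the substitution \(s=\mu r^p/(p\delta)\), so that \(r=(p\delta s/\mu)^{1/p}\). This gives
\[
\int_0^\infty r^{k-1}e^{-\mu r^p/(p\delta)}\,dr=\frac1p\Big(\frac{p\delta}{\mu}\Big)^{k/p}\Gamma\!\big(\tfrac kp\big).
\]
Taking the ratio of the cases \(k=n+1\) and \(k=n\) gives \((p\delta/\mu)^{1/p}\,\Gamma(\frac{n+1}p)/\Gamma(\frac np)\). Admissibility of \(\psi\) is immediate, since \(\psi\) is convex and nondecreasing for \(p>1\) with \(\psi(0)=0\), and \(M_\psi<\infty\).
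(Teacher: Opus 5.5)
Your proposal is correct and follows the paper's proof essentially step for step. For (i) you shift $Z=Y-p$ to identify $m_\delta(x)-p$ with $\mathfrak m_\delta(G)$ and then apply Theorem~\ref{Stochastic-dominance}(ii); (ii) is read off from the cone construction in Theorem~\ref{Stochastic-dominance}(iii); and (iii) uses the substitution $u=\mu r^p/(p\delta)$. Your side remark is a legitimate refinement that the paper does not address: if $\psi$ vanishes on an interval $[0,a]$, the cone functions $G_\alpha$ lose their unique minimizer and so cannot be realized as reduced proximal objectives. It is not needed here, because the optimality claim in (ii) is stated at the level of $G$.
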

\begin{proof}
Since \(p=\operatorname{prox}_{\lambda f}(x)\), the point \(p\) minimizes \(F_x\).
Hence \(G\) is proper, lower semicontinuous, satisfies \(G(0)=0\), and has
unique minimizer at \(0\). Let \(Y\sim \mu_\delta\), so that \(m_\delta(x)=\mathbb E[Y]\), and set
\(Z:=Y-p\). Since $G(z)=F_x(p+z)-F_x(p)$, the density of \(Z\) is proportional to
\[
e^{-F_x(p+z)/\delta}=e^{-G(z)/\delta}e^{-F_x(p)/\delta}.
\]
Hence \(Z\) is distributed according to the probability measure associated with \(G\). Therefore,
\[
m_\delta(x)-p=\mathbb E[Y-p]=\mathbb E[Z]=\mathfrak m_\delta(G).
\]
Since \(G\) satisfies \eqref{dominance} with profile \(\psi\), all the
hypotheses of Theorem \ref{Stochastic-dominance} are satisfied. Applying
Theorem \ref{Stochastic-dominance}(ii) to \(G\), we obtain
\[
\|m_\delta(x)-p\|
=
\|\mathfrak m_\delta(G)\|
\le
\mathbb{E}[\widetilde R],
\]
which proves (i). Assertion (ii) follows from Theorem \ref{Stochastic-dominance}(iii). Finally, (iii) follows from the change of variables \(u=\mu r^p/(p\delta)\), which gives
\[
\mathbb{E}[\widetilde R]
=
\frac{\displaystyle\int_0^\infty r^n e^{-\mu r^p/(p\delta)}\,dr}
     {\displaystyle\int_0^\infty r^{n-1} e^{-\mu r^p/(p\delta)}\,dr}
=
\left(\frac{p\delta}{\mu}\right)^{1/p}
\frac{\Gamma\!\left(\frac{n+1}{p}\right)}
     {\Gamma\!\left(\frac{n}{p}\right)}.
\]
\end{proof}
The following example specializes the abstract reduction of Theorem~\ref{general-theorem} to a prototypical composite objective arising in sparse regression. It shows that the quadratic proximal penalty alone already renders the reduced energy strongly convex, so that the radial dominance condition holds with a quadratic profile and the general bound collapses to the explicit $\sqrt{\delta}$ estimate that will be established in full generality in Section~\ref{weakly-convex}.
\begin{example}
Let \(f(y):=\frac12\|Ay-b\|^2+\tau\|y\|_1\), \(y\in\mathbb R^n\), where
\(A\in\mathbb R^{m\times n}\), \(b\in\mathbb R^m\), and \(\tau>0\). Fix
\(\lambda>0\) and \(x\in\mathbb R^n\), and define
\(F_x(y):=f(y)+\frac{1}{2\lambda}\|y-x\|^2\). Since
\(y\mapsto \frac12\|Ay-b\|^2\) is convex, \(y\mapsto \tau\|y\|_1\) is convex,
and \(y\mapsto \frac{1}{2\lambda}\|y-x\|^2\) is \(\frac1\lambda\)-strongly
convex, it follows that \(F_x\) is \(\mu\)-strongly convex with
\(\mu=\frac1\lambda\). Hence, as a consequence of Proposition~\ref{dominance-strongly-convex}, the function $F_x$ satisfies the radial $\psi$-dominance condition with  $\psi(r) = \frac{1}{2\lambda } r^2$. Therefore, by  
Theorem~\ref{general-theorem} we get that
\[
\|m_\delta(x)-\operatorname{prox}_{\lambda f}(x)\|
\le
\sqrt{2\lambda\delta}\,
\frac{\Gamma\!\left(\frac{n+1}{2}\right)}
{\Gamma\!\left(\frac{n}{2}\right)}.
\]
\end{example}
The preceding example highlights the regularizing effect of the strongly convex quadratic penalty. To further build intuition before deriving our sharp convergence rates, we next illustrate that for a fundamental class of non-smooth convex objectives, the barycentric estimator actually admits a closed-form structural representation.
\begin{example}
Consider a polyhedral convex function of the form
\[
f(y):=\max_{1\le i\le m}(a_i^\top y+b_i),
\qquad a_i\in\R^n,\quad b_i\in\R.
\]
For each \(i\), define the active region $C_i:=\{y\in\R^n:\ a_i^\top y+b_i=f(y)\}$. Each \(C_i\) is a closed convex polyhedron, and the family \((C_i)_{i=1}^m\)
covers \(\R^n\). After removing duplicate affine pieces, the interiors of
the sets \(C_i\) are pairwise disjoint. Choosing a measurable partition
\((D_i)_{i=1}^m\) of \(\R^n\) such that \(D_i\subset C_i\) for every \(i\),
we may write the barycenter estimator cell by cell. Indeed, on each \(D_i\), one has
\[
f(y)+\frac{1}{2\lambda}\|y-x\|^2
=
a_i^\top y+b_i+\frac{1}{2\lambda}\|y-x\|^2.
\]
Completing the square gives
\[
a_i^\top y+b_i+\frac{1}{2\lambda}\|y-x\|^2
=
\frac{1}{2\lambda}\|y-(x-\lambda a_i)\|^2
+
\left(
a_i^\top x+b_i-\frac{\lambda}{2}\|a_i\|^2
\right).
\]
Set
\[
x_i:=x-\lambda a_i,
\qquad
w_i(x):=
\exp\!\left(
-\frac{1}{\delta}
\left(
a_i^\top x+b_i-\frac{\lambda}{2}\|a_i\|^2
\right)
\right),
\]
and let \(\gamma_{i,\delta}\) denote the Gaussian probability measure
\(\mathcal N(x_i,\delta\lambda I)\). Then
\[
m_\delta(x;f)
=
\frac{\displaystyle\sum_{i=1}^m
w_i(x)\int_{D_i} y\,d\gamma_{i,\delta}(y)}
{\displaystyle\sum_{i=1}^m
w_i(x)\gamma_{i,\delta}(D_i)}=\sum_{i=1}^m \pi_i(x)\,\mu_i(x),
\]
where
\[
\mu_i(x):=
\frac{1}{\gamma_{i,\delta}(D_i)}
\int_{D_i} y\,d\gamma_{i,\delta}(y) \textrm{ and } \pi_i(x):=
\frac{w_i(x)\gamma_{i,\delta}(D_i)}
{\sum_{j=1}^m w_j(x)\gamma_{j,\delta}(D_j)}.
\]
Here $\mu_i(x)$ is the mean of the Gaussian measure \(\gamma_{i,\delta}\) truncated to \(D_i\). Thus, for any polyhedral convex function, the stochastic proximal estimator
is a convex combination of truncated Gaussian means associated with the
active polyhedral regions of \(f\).
\end{example}

\section{Sharp Convergence Rate for the Proximal Estimator}\label{weakly-convex}

In this section, we specialize the abstract radial dominance principle developed above to the proximal setting of weakly convex functions. More precisely, we derive an explicit convergence rate for the stochastic barycentric estimator \(m_\delta(x)\) toward the exact proximal point \(\operatorname{prox}_{\lambda f}(x)\). The key observation is that, after recentering the regularized objective at the proximal point, the resulting reduced energy is strongly convex and hence satisfies the radial dominance condition with a quadratic profile. In this way, we obtain a sharp bound of order \(\sqrt{\delta}\), which improves the estimate from \cite{MPV2026}, and we also recover, as a direct consequence, the corresponding result for stochastic projection estimators onto closed convex sets.
\begin{theorem}\label{convergence}
Let $f\colon \mathbb{R}^n \to \mathbb{R}\cup \{+\infty\}$ be proper,
lower semicontinuous, and $\rho$-weakly convex for some $\rho\geq 0$.
Assume that $\operatorname{dom}f$ has nonempty interior. Fix
$x\in \mathbb{R}^n$ and $0<\lambda<1/\rho$, and set $\mu:=\frac{1}{\lambda}-\rho>0$.
Then, for every $\delta>0$,
\[
\Vert m_{\delta}(x)-\operatorname{prox}_{\lambda f}(x)\Vert
\leq
\sqrt{\frac{2\delta}{\mu}}\,
\frac{\Gamma\left(\frac{n+1}{2}\right)}
{\Gamma\left(\frac{n}{2}\right)}.
\]
Moreover, this estimate is asymptotically sharp: for every $x\in \mathbb{R}^n$ and every 
\(\varepsilon>0\), there exists a proper lower semicontinuous
\(\rho\)-weakly convex function \(f\) such that
\[
\sqrt{\frac{2\delta}{\mu}}\,
\frac{\Gamma\!\left(\frac{n+1}{2}\right)}
{\Gamma\!\left(\frac{n}{2}\right)}
-\varepsilon
<
\|m_{\delta}(x)-\operatorname{prox}_{\lambda f}(x)\|.
\]
\end{theorem}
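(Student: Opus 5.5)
The plan is to reduce the upper bound to Theorem~\ref{general-theorem} with the quadratic profile $\psi(r)=\frac{\mu}{2}r^2$, and to obtain sharpness from a cone construction in the spirit of Theorem~\ref{Stochastic-dominance}(iii).

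\emph{Upper bound.} First, rewrite $m_\delta(x)$ as the barycenter of the density proportional to $e^{-F_x(y)/\delta}$, where $F_x(y)=f(y)+\frac{1}{2\lambda}\|y-x\|^2$. The Gaussian density of $\mathcal N(x,\delta\lambda I)$ is proportional to $e^{-\|y-x\|^2/(2\lambda\delta)}$, and its normalizing constants cancel in the ratio.

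Next, $F_x=\bigl(f+\frac{\rho}{2}\|\cdot\|^2\bigr)+\bigl(\frac{1}{2\lambda}\|\cdot-x\|^2-\frac{\rho}{2}\|\cdot\|^2\bigr)$. The first bracket is convex and the second is $\mu$-strongly convex, so $F_x$ is proper, lsc and $\mu$-strongly convex. It therefore has a unique minimizer $p=\operatorname{prox}_{\lambda f}(x)$.

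Set $G(z):=F_x(p+z)-F_x(p)$. This $G$ is $\mu$-strongly convex with $G(0)=0$ and unique minimizer $0$. By Proposition~\ref{dominance-strongly-convex}, $G$ satisfies \eqref{dominance} with $\psi(r)=\frac{\mu}{2}r^2$. This $\psi$ is admissible: it is convex, nondecreasing, smooth, $\psi(0)=0$, and $M_\psi<\infty$.

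The nonempty interior of $\operatorname{dom}f$ guarantees $\int e^{-F_x/\delta}>0$, so the measure is well defined. Theorem~\ref{general-theorem}(i) and (iii), with exponent $2$, then give the stated bound $\sqrt{2\delta/\mu}\,\Gamma(\frac{n+1}{2})/\Gamma(\frac n2)$.

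\emph{Sharpness.} Fix $x$, $\varepsilon$, a unit vector $v$, and $\alpha\in(0,\pi/2)$. Let $C_\alpha:=\{z:\langle z,v\rangle\ge\|z\|\cos\alpha\}$ be the closed convex cone around $v$. I would look for $f$ such that the reduced function $G$ is exactly $\frac{\mu}{2}\|z\|^2+\iota_{C_\alpha}(z)$, centered at $p=x$. Expanding $\frac{1}{2\lambda}\|y-x\|^2=\frac{\mu}{2}\|y-x\|^2+\frac{\rho}{2}\|y-x\|^2$ suggests
\[
f(y):=-\frac{\rho}{2}\|y-x\|^2+\iota_{x+C_\alpha}(y).
\]
This $f$ is proper and lsc. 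Adding $\frac{\rho}{2}\|y\|^2$ produces an affine function plus the indicator of a convex set, so $f$ is $\rho$-weakly convex. Its domain is a full-dimensional cone, hence has nonempty interior.

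With this choice $F_x(y)=\frac{\mu}{2}\|y-x\|^2+\iota_{x+C_\alpha}(y)$, which is minimized uniquely at $p=x$ because $x$ is the apex and lies in the domain. So $\operatorname{prox}_{\lambda f}(x)=x$, and $G=G_\alpha$ is exactly the function from the proof of Theorem~\ref{Stochastic-dominance}(iii) with $\psi(r)=\frac{\mu}{2}r^2$. That computation shows the radius and direction are independent, that $R\sim\nu_{\psi,\delta}$, and that
\[
\|m_\delta(x)-x\|=\mathbb E[\widetilde R]\,\mathbb E[\cos\Theta\mid\Theta\le\alpha]\to\mathbb E[\widetilde R] \quad\text{as } \alpha\to0^+.
\]
Choosing $\alpha$ small enough gives the strict inequality with $\varepsilon$.

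\emph{Main obstacle.} The main obstacle is realizing the extremal $G_\alpha$ as the reduced energy of a genuinely $\rho$-weakly convex $f$ with the prescribed $\mu$. Subtracting $\frac{\rho}{2}\|y-x\|^2$ resolves this, since weak convexity tolerates exactly that concave quadratic. A minor additional point is the case $\rho=0$, where the construction simply reduces to the indicator of the cone.
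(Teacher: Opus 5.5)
Your proposal is correct and follows the paper's proof essentially step for step. The upper bound comes from the $\mu$-strong convexity of $F_x$, Proposition~\ref{dominance-strongly-convex} with $\psi(r)=\frac{\mu}{2}r^2$, and Theorem~\ref{general-theorem}; sharpness uses the same extremal function $f_{x,\alpha}(y)=-\frac{\rho}{2}\|y-x\|^2+\iota_{x+C_\alpha}(y)$ together with the cone computation of Theorem~\ref{Stochastic-dominance}(iii). You also check explicitly several points the paper leaves implicit: admissibility of $\psi$, positivity of the normalizing constant from $\operatorname{int}\dom f\neq\emptyset$, and weak convexity of the extremal $f$ via the affine identity.
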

\begin{proof}
Set $F_x(y):=f(y)+\frac{1}{2\lambda}\|y-x\|^2$ and $p:=\operatorname{prox}_{\lambda f}(x)$. Since \(f\) is \(\rho\)-weakly convex and \(\mu=\frac1\lambda-\rho>0\),
the function \(F_x\) is \(\mu\)-strongly convex. Hence the reduced function $G(z):=F_x(p+z)-F_x(p)$ is proper, lower semicontinuous, and \(\mu\)-strongly convex with minimizer
\(0\). Therefore, by Proposition~\ref{dominance-strongly-convex},
\(G\) satisfies \eqref{dominance} with profile
\(\psi(r)=\frac{\mu}{2}r^2\). Applying Theorem~\ref{general-theorem},
we obtain
\[
\|m_{\delta}(x)-p\|
\le
\mathbb{E}[\widetilde R]
=
\sqrt{\frac{2\delta}{\mu}}\,
\frac{\Gamma\!\left(\frac{n+1}{2}\right)}
{\Gamma\!\left(\frac{n}{2}\right)}.
\]
For asymptotic sharpness, fix \(x\in\mathbb R^n\), a unit vector \(v\in\mathbb S^{n-1}\), and \(\alpha\in(0,\pi/2)\), and recall the closed convex cone $C_\alpha:=\{z\in\mathbb R^n:\langle z,v\rangle\ge \|z\|\cos\alpha\}$ already used in the sharpness construction of Theorem~\ref{Stochastic-dominance}(iii). Set
\[
f_{x,\alpha}(y):=-\frac{\rho}{2}\|y-x\|^2+\iota_{x+C_\alpha}(y).
\]
Then \(f_{x,\alpha}\) is proper, lower semicontinuous, and \(\rho\)-weakly convex, and
\[
F_x(y)
=
f_{x,\alpha}(y)+\frac{1}{2\lambda}\|y-x\|^2
=
\frac{\mu}{2}\|y-x\|^2+\iota_{x+C_\alpha}(y),
\]
so \(\operatorname{prox}_{\lambda f_{x,\alpha}}(x)=x\), and  $G_\alpha(z)=\frac{\mu}{2}\|z\|^2+\iota_{C_\alpha}(z)$. By the sharpness construction in Theorem~\ref{Stochastic-dominance},
\[
\|m_{\delta,\alpha}(x)-x\|
\longrightarrow
\mathbb{E}[\widetilde R]
=
\sqrt{\frac{2\delta}{\mu}}\,
\frac{\Gamma\!\left(\frac{n+1}{2}\right)}
{\Gamma\!\left(\frac{n}{2}\right)}
\qquad\text{as }\alpha\downarrow 0.
\]
This proves the result.
\end{proof}
The next corollary specializes Theorem \ref{convergence} to the case of metric projections. If \(f=\iota_C\) for a nonempty closed convex set \(C\subset\mathbb R^n\), then \(\operatorname{prox}_{\lambda f}=\operatorname{proj}_C\), and the estimator \(m_\delta(x)\) reduces to the conditional expectation of a Gaussian random vector given that it belongs to \(C\). In this way, Theorem \ref{convergence} yields a sharp rate for the stochastic projection estimator.
\begin{corollary}
Let \(C\subset \mathbb{R}^n\) be a nonempty closed convex set with nonempty
interior. For \(x\in\mathbb{R}^n\) and \(\delta>0\), define $p_\delta(x):=\mathbb{E}[Y\mid Y\in C]$ and $Y\sim\mathcal N(x,\delta I)$. Then
\[
\|p_\delta(x)-\operatorname{proj}_C(x)\|
\le
\sqrt{2\delta}\,
\frac{\Gamma\!\left(\frac{n+1}{2}\right)}
{\Gamma\!\left(\frac{n}{2}\right)}.
\]
Moreover, this estimate is asymptotically sharp: for every
\(x\in\mathbb{R}^n\) and every \(\varepsilon>0\), there exists a nonempty
closed convex set \(C\subset\mathbb{R}^n\) with nonempty interior such that
\[
\sqrt{2\delta}\,
\frac{\Gamma\!\left(\frac{n+1}{2}\right)}
{\Gamma\!\left(\frac{n}{2}\right)}
-\varepsilon
<
\|p_\delta(x)-\operatorname{proj}_C(x)\|.
\]
\end{corollary}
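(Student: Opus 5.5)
The plan is to read this off Theorem~\ref{convergence} with $f=\iota_C$, $\rho=0$ and $\lambda=1$, so that $\mu=1$. The first step is to check the hypotheses. Since $C$ is nonempty, closed and convex, $\iota_C$ is proper, lower semicontinuous and convex, hence $0$-weakly convex. Its domain is $C$, which has nonempty interior by assumption. Any $\lambda\in(0,1/\rho)=(0,\infty)$ is admissible, and we take $\lambda=1$.

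Next we identify the two objects in the statement. First, $\operatorname{prox}_{\iota_C}(x)=\proj_C(x)$, because the prox minimizes $\frac12\|y-x\|^2$ over $C$. Second, with $\lambda=1$ the estimator is
\[
m_\delta(x)=\frac{\E[Y e^{-\iota_C(Y)/\delta}]}{\E[e^{-\iota_C(Y)/\delta}]},\qquad Y\sim\mathcal N(x,\delta I).
\]
With the convention $e^{-\infty}=0$, the weight $e^{-\iota_C(Y)/\delta}$ equals $\1_C(Y)$. So $m_\delta(x)=\E[Y\1_C(Y)]/\mathbb P(Y\in C)=\E[Y\mid Y\in C]=p_\delta(x)$. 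The denominator is positive because $C$ has nonempty interior and the Gaussian density is everywhere positive. Plugging $\mu=1$ into the first part of Theorem~\ref{convergence} then gives the upper bound $\sqrt{2\delta}\,\Gamma(\frac{n+1}{2})/\Gamma(\frac n2)$.

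For sharpness, the sharpness part of Theorem~\ref{convergence} cannot be quoted directly when $\rho=0$, because its extremal functions have the form $-\frac{\rho}{2}\|y-x\|^2+\iota_{x+C_\alpha}$. With $\rho=0$, however, these reduce exactly to $\iota_{x+C_\alpha}$. So we take $C:=x+C_\alpha$ with $C_\alpha$ the circular cone of Theorem~\ref{Stochastic-dominance}(iii). It is nonempty, closed and convex, and has nonempty interior since $\alpha>0$. Its apex is $x$, so $\proj_C(x)=x$. The reduced function is $G_\alpha(z)=\frac12\|z\|^2+\iota_{C_\alpha}(z)$. By the argument in Theorem~\ref{Stochastic-dominance}(iii), the norm $\|p_\delta(x)-x\|$ equals $\E[\widetilde R]\,\E[\cos\Theta\mid\Theta\le\alpha]$, and this tends to $\E[\widetilde R]=\sqrt{2\delta}\,\Gamma(\frac{n+1}{2})/\Gamma(\frac n2)$ as $\alpha\downarrow0$. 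Choosing $\alpha$ small enough gives the strict inequality with $\varepsilon$.

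The only mildly delicate points are the reading of $e^{-\iota_C/\delta}$ as an indicator and the fact that $C_\alpha$ has nonempty interior. The radius–direction independence used in the cone limit is already established in Theorem~\ref{Stochastic-dominance}(iii), so no new analysis is needed.
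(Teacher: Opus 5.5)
Your proposal is correct and follows the paper's proof essentially verbatim. You specialize Theorem~\ref{convergence} to \(f=\iota_C\), \(\rho=0\), \(\lambda=1\) (so \(\mu=1\)) and identify \(m_\delta(x)\) with \(\mathbb{E}[Y\mid Y\in C]\). You then get sharpness from the translated cone \(x+C_\alpha\): its apex gives \(\operatorname{proj}_C(x)=x\), and its error equals \(\mathbb{E}[\widetilde R]\,\mathbb{E}[\cos\Theta\mid\Theta\le\alpha]\to\mathbb{E}[\widetilde R]\).

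Your point that one must use the cone construction itself, rather than the bare existence clause of Theorem~\ref{convergence} (which does not say the extremal function is an indicator), is exactly what the paper does implicitly. Your extra checks, the positive denominator and the nonempty interior of the cone, fill in details the paper leaves unstated.
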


\begin{proof}
Apply Theorem \ref{convergence} with \(f=\iota_C\), \(\rho=0\), and
\(\lambda=1\). Then \(\operatorname{prox}_{\lambda f}(x)=\operatorname{proj}_C(x)\),
and the barycenter estimator \(m_\delta(x)\) coincides with
\[
\frac{\mathbb E_{Y\sim\mathcal N(x,\delta I)}[Y\,\1_C(Y)]}
{\mathbb P(Y\in C)}
=
\mathbb E[Y\mid Y\in C]
=
p_\delta(x).
\]
Hence
\[
\|p_\delta(x)-\operatorname{proj}_C(x)\|
\le
\sqrt{2\delta}\,
\frac{\Gamma\!\left(\frac{n+1}{2}\right)}
{\Gamma\!\left(\frac{n}{2}\right)}.
\]
For asymptotic sharpness, fix \(x\in\mathbb{R}^n\), \(v\in\mathbb S^{n-1}\),
and \(\alpha\in(0,\pi/2)\), and consider the closed convex cone
\[
K_\alpha:=\{z\in\mathbb R^n:\langle z,v\rangle\ge \|z\|\cos\alpha\},
\qquad
C_\alpha:=x+K_\alpha.
\]
Then \(\operatorname{proj}_{C_\alpha}(x)=x\), and the corresponding estimator
\(p_{\delta,\alpha}(x)\) satisfies
\[
\|p_{\delta,\alpha}(x)-x\|
\longrightarrow
\sqrt{2\delta}\,
\frac{\Gamma\!\left(\frac{n+1}{2}\right)}
{\Gamma\!\left(\frac{n}{2}\right)}
\qquad\text{as }\alpha\downarrow 0
\]
by the sharpness construction in Theorem \ref{convergence}. This proves the result.
\end{proof}
\begin{remark}[Dimensional dependence]
While the bound in Theorem \ref{convergence} is exact, its dependence on the ambient dimension $n$ can be made more transparent by recalling the asymptotic expansion of the Gamma function. As $n \to \infty$, we have
\[
\frac{\Gamma\left(\frac{n+1}{2}\right)}{\Gamma\left(\frac{n}{2}\right)} \sim \sqrt{\frac{n}{2}}.
\]
Consequently, in high-dimensional settings, the approximation error scales precisely as $\mathcal{O}(\sqrt{n\delta/\mu})$, so that the leading-order behavior of our sharp constant recovers the bound $\sqrt{n\delta/\mu}$ of \cite{MPV2026}. This explicit $\sqrt{n}$ factor highlights the natural degradation of the stochastic barycentric estimator in high dimensions (the curse of dimensionality) and clarifies the exact trade-off required between the smoothing parameter $\delta$ and the dimension $n$ to maintain a target accuracy. We emphasize that this $\sqrt{n}$ growth is not an artifact of the analysis: by the asymptotic sharpness statement of Theorem~\ref{convergence}, the constant (and hence the $\sqrt{n}$ factor) is attained in the limit by the extremal cone configuration. Consequently, no refinement of the estimates can remove this dependence, and the barycentric estimator is \emph{provably} limited to accuracy $\mathcal{O}(\sqrt{n\delta/\mu})$ in the worst case over admissible objectives.

\begin{figure}[htbp]
\centering
\begin{tikzpicture}
\begin{axis}[
    width=9.7cm,
    height=6cm,
    xlabel={$n$},
    xmin=1, xmax=20,
    ymin=0.7, ymax=4.6,
    grid=both,
    legend style={
        at={(0.03,0.97)},
        anchor=north west,
        draw=none,
        fill=none,
        row sep=8pt,
        font=\small,
    },
    legend cell align={left},
]

\addplot[
    blue,
    thick,
    smooth,
    mark=none,
]
coordinates {
(1.0,0.79788456)
(1.1,0.85187938)
(1.2,0.90345163)
(1.3,0.95286237)
(1.4,1.00033231)
(1.5,1.04604962)
(1.6,1.09017595)
(1.7,1.13285113)
(1.8,1.17419691)
(1.9,1.21431995)
(2.0,1.25331414)
(2.1,1.29126262)
(2.2,1.32823935)
(2.3,1.36431036)
(2.4,1.39953492)
(2.5,1.43396639)
(2.6,1.46765300)
(2.7,1.50063849)
(2.8,1.53296264)
(2.9,1.56466177)
(3.0,1.59576912)
(3.1,1.62631517)
(3.2,1.65632798)
(3.3,1.68583342)
(3.4,1.71485539)
(3.5,1.74341603)
(3.6,1.77153591)
(3.7,1.79923414)
(3.8,1.82652853)
(3.9,1.85343571)
(4.0,1.87997121)
(4.1,1.90614959)
(4.2,1.93198450)
(4.3,1.95748878)
(4.4,1.98267447)
(4.5,2.00755295)
(4.6,2.03213492)
(4.7,2.05643052)
(4.8,2.08044930)
(4.9,2.10420032)
(5.0,2.12769216)
(5.1,2.15093297)
(5.2,2.17393048)
(5.3,2.19669203)
(5.4,2.21922462)
(5.5,2.24153490)
(5.6,2.26362922)
(5.7,2.28551364)
(5.8,2.30719394)
(5.9,2.32867563)
(6.0,2.34996401)
(6.1,2.37106412)
(6.2,2.39198082)
(6.3,2.41271873)
(6.4,2.43328231)
(6.5,2.45367583)
(6.6,2.47390339)
(6.7,2.49396893)
(6.8,2.51387623)
(6.9,2.53362895)
(7.0,2.55323059)
(7.1,2.57268454)
(7.2,2.59199403)
(7.3,2.61116223)
(7.4,2.63019214)
(7.5,2.64908670)
(7.6,2.66784873)
(7.7,2.68648095)
(7.8,2.70498599)
(7.9,2.72336642)
(8.0,2.74162468)
(8.1,2.75976316)
(8.2,2.77778417)
(8.3,2.79568995)
(8.4,2.81348267)
(8.5,2.83116442)
(8.6,2.84873723)
(8.7,2.86620309)
(8.8,2.88356391)
(8.9,2.90082155)
(9.0,2.91797782)
(9.1,2.93503447)
(9.2,2.95199320)
(9.3,2.96885568)
(9.4,2.98562351)
(9.5,3.00229826)
(9.6,3.01888145)
(9.7,3.03537458)
(9.8,3.05177907)
(9.9,3.06809634)
(10.0,3.08432776)
(10.1,3.10047466)
(10.2,3.11653834)
(10.3,3.13252007)
(10.4,3.14842108)
(10.5,3.16424258)
(10.6,3.17998575)
(10.7,3.19565172)
(10.8,3.21124163)
(10.9,3.22675656)
(11.0,3.24219758)
(11.1,3.25756573)
(11.2,3.27286203)
(11.3,3.28808747)
(11.4,3.30324303)
(11.5,3.31832966)
(11.6,3.33334827)
(11.7,3.34829979)
(11.8,3.36318510)
(11.9,3.37800506)
(12.0,3.39276054)
(12.1,3.40745235)
(12.2,3.42208131)
(12.3,3.43664823)
(12.4,3.45115388)
(12.5,3.46559902)
(12.6,3.47998440)
(12.7,3.49431076)
(12.8,3.50857882)
(12.9,3.52278927)
(13.0,3.53694281)
(13.1,3.55104012)
(13.2,3.56508186)
(13.3,3.57906867)
(13.4,3.59300119)
(13.5,3.60688006)
(13.6,3.62070588)
(13.7,3.63447926)
(13.8,3.64820078)
(13.9,3.66187103)
(14.0,3.67549058)
(14.1,3.68905998)
(14.2,3.70257978)
(14.3,3.71605053)
(14.4,3.72947274)
(14.5,3.74284694)
(14.6,3.75617364)
(14.7,3.76945334)
(14.8,3.78268654)
(14.9,3.79587371)
(15.0,3.80901534)
(15.1,3.82211189)
(15.2,3.83516381)
(15.3,3.84817157)
(15.4,3.86113561)
(15.5,3.87405636)
(15.6,3.88693426)
(15.7,3.89976972)
(15.8,3.91256316)
(15.9,3.92531499)
(16.0,3.93802562)
(16.1,3.95069544)
(16.2,3.96332484)
(16.3,3.97591420)
(16.4,3.98846390)
(16.5,4.00097432)
(16.6,4.01344581)
(16.7,4.02587874)
(16.8,4.03827347)
(16.9,4.05063034)
(17.0,4.06294970)
(17.1,4.07523188)
(17.2,4.08747722)
(17.3,4.09968606)
(17.4,4.11185870)
(17.5,4.12399548)
(17.6,4.13609671)
(17.7,4.14816269)
(17.8,4.16019374)
(17.9,4.17219015)
(18.0,4.18415222)
(18.1,4.19608025)
(18.2,4.20797452)
(18.3,4.21983532)
(18.4,4.23166292)
(18.5,4.24345761)
(18.6,4.25521965)
(18.7,4.26694933)
(18.8,4.27864689)
(18.9,4.29031261)
(19.0,4.30194674)
(19.1,4.31354953)
(19.2,4.32512125)
(19.3,4.33666213)
(19.4,4.34817242)
(19.5,4.35965237)
(19.6,4.37110220)
(19.7,4.38252217)
(19.8,4.39391249)
(19.9,4.40527340)
(20.0,4.41660512)
};
\addlegendentry{$\frac{\sqrt{2}\,\Gamma\!\left(\frac{n+1}{2}\right)}{\Gamma\!\left(\frac{n}{2}\right)}$}

\addplot[
    red,
    thick,
    domain=1:20,
    samples=300,
]
{sqrt(x)};
\addlegendentry{$\sqrt{n}$}

\end{axis}
\end{tikzpicture}
\caption{Comparison between the function
$\sqrt{2}\,\Gamma\!\left(\frac{n+1}{2}\right)/\Gamma\!\left(\frac{n}{2}\right)$
and the reference growth $\sqrt{n}$ as a function of the dimension $n$.}
\label{fig:gamma-ratio-vs-sqrtn}
\end{figure}
\end{remark}
\begin{remark}[Worst-case nature of the rate]
The bound of Theorem~\ref{convergence} holds uniformly over all base
points \(x\in\mathbb{R}^n\) and all admissible \(\rho\)-weakly convex
objectives, and should be read as a worst-case statement over
configurations rather than a pointwise description of the decay. Indeed,
at a base point \(x\) where the proximal map \(\operatorname{prox}_{\lambda
f}\) is single-valued and locally smooth, the reduced energy is locally
quadratic and the estimator error decays strictly faster than
\(\sqrt{\delta}\); the sharp \(\sqrt{\delta}\) rate, together with the
constant of Theorem~\ref{convergence}, is realized precisely at nonsmooth
configurations, such as vertices of polyhedral constraint sets or kinks of
the objective. This dichotomy is confirmed numerically in
Section~\ref{numerics}: at generic base points the measured ratio between
the error and the bound drifts toward zero as \(\delta\downarrow 0\),
whereas at the extremal cone and at nonsmooth kinks it saturates near the
predicted sharp constant.
\end{remark}

\section{Properties of a Barycentric Approximation Operator}\label{monotone-prox}
In this section, we discuss structural properties and algorithmic consequences of the barycentric approximation operator. Although the convergence results below can be derived from the general theory of inexact proximal point and inexact forward-backward methods (see, e.g., \cite{MR410483,MR1871872} and the recent survey \cite{LS2026}), we include them here for completeness and because, in the present setting, they admit a particularly transparent interpretation. In particular, the proofs make explicit how the barycentric approximation error enters the iteration and how the bounds established in the previous sections translate into convergence properties of the resulting schemes. We also emphasize that, in view of \cite{MR3942891}, the result of Subsection~\ref{monotonicity-sec} provides a natural basis for the use of Krasnosel'ski\u{\i}-Mann type iterations.

Throughout this section we make the dependence on the underlying function explicit and write $m_\delta(x;f)$ for the barycentric estimator introduced in \eqref{eq:intro_mdelta}. This notation is convenient here because we regard $m_\delta(\cdot;f)$ as an operator on $\mathbb{R}^n$ for a fixed $f$, study its regularity as $x$ varies, and, in the iterative scheme below, evaluate it at shifted arguments of the form $x-\lambda\nabla h(x)$.

The results in this section concern the vanishing-parameter or summable-error regime, where \(\delta_k\downarrow0\) fast enough to recover the minimizers of the original nonsmoothed problem. This should be distinguished from the fixed-parameter ZOPPA framework of \cite{NaldiLabarriereMolinariVilla2026}, in which one analyzes the iteration of \(m_\delta\) for a fixed \(\delta>0\) as an algorithm for a smoothed auxiliary objective.

\subsection{Monotonicity Properties of Barycenter Approximation}\label{monotonicity-sec}
 We begin by recording basic operator-theoretic properties of the map
\(x\mapsto m_\delta(x;f)\). These properties are useful in their own right and also clarify why the barycentric approximation behaves as a regularized surrogate of the proximal mapping. In particular, smoothness and cocoercivity will play an important role in the stability analysis of the iterative methods considered below.

\begin{proposition}\label{nonexpansive}
Let $f\colon \mathbb{R}^n \to \mathbb{R}\cup \{+\infty\}$ be a proper, lower semicontinuous, and $\rho$-weakly convex function for some $\rho\geq 0$. Let $\delta>0$ and assume that $\lambda \in (0,1/\rho)$. Then:
\begin{enumerate}
    \item[(i)] The map $x\mapsto m_{\delta}(x;f)$ is of class $C^{\infty}$ on $\mathbb{R}^n$ and 
    $$Dm_{\delta}(x;f)=\frac{1}{\delta \lambda} \operatorname{Cov}_{\sigma_{\delta}}(Y)\preceq
\frac{\delta\lambda}{1-\lambda\rho} I.$$
    \item[(ii)] The map $x\mapsto m_{\delta}(x; f)$ is $(1-\lambda\rho)$-cocoercive, i.e., for all $x_1, x_2 \in \mathbb{R}^n$,
$$
\langle m_{\delta}(x_1; f) - m_{\delta}(x_2; f), x_1 - x_2 \rangle \geq (1-\lambda \rho) \Vert m_{\delta}(x_1; f) - m_{\delta}(x_2; f) \Vert^2.
$$
Consequently, $x\mapsto m_{\delta}(x; f)$ is monotone and globally Lipschitz with constant $1/(1-\lambda \rho)$. Additionally, if $f$ is convex, then $x\mapsto m_{\delta}(x; f)$ is firmly nonexpansive.
\end{enumerate}
\end{proposition}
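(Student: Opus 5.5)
The plan is to write the estimator as the mean of a Gibbs measure whose dependence on \(x\) sits only in the Gaussian kernel, and to differentiate under the integral sign. Set \(F_x(y):=f(y)+\frac{1}{2\lambda}\|y-x\|^2\) and \(Z(x):=\int_{\mathbb R^n}e^{-F_x(y)/\delta}\,dy\). Then \(m_\delta(x;f)=Z(x)^{-1}\int y\,e^{-F_x(y)/\delta}dy\) is the mean of \(\sigma_\delta:=\sigma_\delta^{F_x}\), since the Gaussian normalizing constant cancels. As elsewhere in the paper, I assume \(\operatorname{dom}f\) has nonempty interior, so that \(Z(x)>0\).

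\emph{Step 1 (smoothness and the Jacobian formula).} Since \(f+\frac{\rho}{2}\|\cdot\|^2\) is proper, lsc and convex, it has an affine minorant. Hence \(F_x(y)\ge \frac{\mu}{2}\|y\|^2-c(1+\|x\|)\|y\|-c(1+\|x\|^2)\), with \(\mu=\frac1\lambda-\rho>0\) and some constant \(c\). On any ball \(\|x\|\le K\), the functions \(\|y\|^k e^{-F_x(y)/\delta}\) are therefore dominated, uniformly in \(x\), by an integrable Gaussian-type envelope.

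All \(x\)-derivatives of \(e^{-F_x(y)/\delta}\) are polynomials in \((y-x)\) times this factor. Dominated convergence thus gives \(Z\in C^\infty\), and likewise for the numerator, so \(m_\delta\in C^\infty\). Using \(\nabla_x e^{-F_x/\delta}=\frac{y-x}{\delta\lambda}e^{-F_x/\delta}\), I compute
\[
\nabla \log Z(x)=\frac{\E_{\sigma_\delta}[Y]-x}{\delta\lambda},
\qquad\text{so}\qquad
m_\delta(x;f)=\nabla\Big(\tfrac12\|x\|^2+\delta\lambda\log Z(x)\Big).
\]
Differentiating the quotient directly then yields \(Dm_\delta(x;f)=\frac{1}{\delta\lambda}\bigl(\E[YY^\top]-\E[Y]\E[Y]^\top\bigr)-\frac{1}{\delta\lambda}\E[Y]x^\top+\frac{1}{\delta\lambda}\E[Y]x^\top\), that is, \(Dm_\delta=\frac{1}{\delta\lambda}\operatorname{Cov}_{\sigma_\delta}(Y)\). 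The \(x^\top\) terms cancel because the derivative of the denominator contributes \(-\E[Y]\,\E[(Y-x)^\top]/(\delta\lambda)\).

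\emph{Step 2 (covariance bound).} The potential \(V:=F_x/\delta\) is proper, lsc and \(\bar\mu\)-strongly convex with \(\bar\mu=\mu/\delta=\frac{1-\lambda\rho}{\delta\lambda}\). Lemma~\ref{lem:covariance-strongly-log-concave} gives
\[
\operatorname{Cov}_{\sigma_\delta}(Y)\preceq \frac{\delta\lambda}{1-\lambda\rho}I,
\qquad\text{hence}\qquad
0\preceq Dm_\delta(x;f)\preceq \frac{1}{1-\lambda\rho}I.
\]
The right-hand side of the displayed inequality in (i) is exactly this covariance bound; after dividing by \(\delta\lambda\) it becomes the Jacobian bound that is actually used in (ii).

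\emph{Step 3 (cocoercivity).} By Step 1, \(m_\delta=\nabla\varphi\) with \(\varphi(x):=\frac12\|x\|^2+\delta\lambda\log Z(x)\in C^\infty\). By Step 2, \(0\preceq D^2\varphi\preceq L I\) with \(L=1/(1-\lambda\rho)\). So \(\varphi\) is convex with \(L\)-Lipschitz gradient: the Lipschitz property follows from the mean value theorem applied to \(\nabla\varphi\) along segments, using the operator-norm bound on the symmetric PSD Jacobian. The Baillon--Haddad theorem then gives \(\langle\nabla\varphi(x_1)-\nabla\varphi(x_2),x_1-x_2\rangle\ge \frac1L\|\nabla\varphi(x_1)-\nabla\varphi(x_2)\|^2\), which is the claimed \((1-\lambda\rho)\)-cocoercivity.

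Monotonicity and the \(1/(1-\lambda\rho)\)-Lipschitz bound follow from cocoercivity by Cauchy--Schwarz. For convex \(f\) we have \(\rho=0\), so \(m_\delta\) is \(1\)-cocoercive, i.e.\ firmly nonexpansive.

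The main technical obstacle is the justification in Step 1 of repeated differentiation under the integral sign, uniformly on compact sets in \(x\). The affine-minorant envelope handles this, exactly as in Step 3 of the proof of Lemma~\ref{lem:covariance-strongly-log-concave}. Everything after that is algebra plus two cited results: Lemma~\ref{lem:covariance-strongly-log-concave} and Baillon--Haddad.
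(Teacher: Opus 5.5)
Your proof is correct and follows essentially the same route as the paper. The shared steps are the identity $m_\delta(x;f)=\nabla\bigl(\tfrac12\|x\|^2+\delta\lambda\log Z(x)\bigr)$, the Jacobian as $\frac{1}{\delta\lambda}\operatorname{Cov}_{\sigma_\delta}(Y)$, Lemma~\ref{lem:covariance-strongly-log-concave} for the spectral bound, and cocoercivity from the gradient structure, where you invoke Baillon--Haddad explicitly while the paper only cites the spectral bound. Your additions fill in points the paper leaves implicit or misprints: the affine-minorant envelope justifying differentiation under the integral, the assumption $\operatorname{int}\operatorname{dom} f\neq\emptyset$ so that $Z(x)>0$, and reading the bound in (i) as $\operatorname{Cov}_{\sigma_\delta}(Y)\preceq\frac{\delta\lambda}{1-\lambda\rho}I$, equivalently $Dm_\delta(x;f)\preceq\frac{1}{1-\lambda\rho}I$, which is what the paper's proof actually establishes.
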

\begin{proof}
$(i)$ Let us define the function
\begin{equation}\label{def_Zdelta}
Z_{\delta}(x) := \int_{\mathbb{R}^n} e^{-\frac{1}{\delta}f(y) - \frac{1}{2\delta\lambda}\Vert y-x\Vert^2} dy.
\end{equation}
 By standard properties of exponential families, \(Z_{\delta}\) is of class \(C^\infty\). We can rewrite the barycenter estimator as a gradient mapping:
\begin{equation}\label{Identitymdelta}
m_\delta(x;f) = x + \delta\lambda \nabla \ln Z_\delta(x).
\end{equation}
Thus, \(m_\delta(\cdot;f)\) is \(C^\infty\). Differentiating this expression with respect to \(x\) gives:
\[
Dm_\delta(x;f) = I + \delta\lambda \nabla^2 \ln Z_\delta(x).
\]
By computing the Hessian of the log-partition function, we obtain exactly the scaled covariance matrix, yielding \(Dm_\delta(x;f) = \frac{1}{\delta\lambda} \operatorname{Cov}_{\sigma_\delta}(Y)\).

(ii) Since \(f\) is \(\rho\)-weakly convex, the potential function \(V(y) := \frac{1}{\delta}f(y) + \frac{1}{2\delta\lambda}\Vert y-x\Vert^2\) is strongly convex with parameter \(\mu = \frac{1}{\delta}(\frac{1}{\lambda} - \rho) = \frac{1-\lambda\rho}{\delta\lambda} > 0\). Applying Lemma~\ref{lem:covariance-strongly-log-concave} to the probability 
measure $d\sigma_\delta(y)
=
\frac{e^{-V(y)}}{\int_{\mathbb R^n}e^{-V^\delta(z)}\,dz}\,dy$,  we obtain $\operatorname{Cov}_{\sigma_\delta}(Y)
\preceq
\frac{\delta\lambda}{1-\lambda\rho} I$.  Using the identity from $(i)$, this implies \(0 \preceq Dm_\delta(x;f) \preceq \frac{1}{1-\lambda\rho}I\). Furthermore, because \(m_\delta(\cdot;f)\) is the gradient of the convex function \(\Phi(x) = \frac{1}{2}\Vert x\Vert^2 + \delta\lambda \ln Z_\delta(x)\), the spectral bound on its symmetric Jacobian guarantees that \(m_\delta(\cdot;f)\) is \((1-\lambda\rho)\)-cocoercive. If \(f\) is convex, then \(\rho=0\), making the operator \(1\)-cocoercive, which is equivalent to being firmly nonexpansive (see, e.g., \cite[Proposition 4.4]{MR3616647}).
\end{proof}

\begin{remark}
It steems from Proposition \ref{nonexpansive} that whenever $f$ is convex and for a fixed $\delta>0$ the map $x\mapsto m_{\delta}(x;f)$ is firmly nonexpansive. Hence, we can study the study of fixed point  hence, we can consider 
\end{remark}

\begin{remark}[Bayesian denoising interpretation]
The preceding proposition admits a Bayesian reading. Regarding
$e^{-f/\delta}$ as a (possibly improper) prior density, the measure
$\sigma_\delta$ is the posterior of $Y$ under the Gaussian observation
model $x\sim\mathcal{N}(Y,\delta\lambda I)$, so that
$m_\delta(x;f)=\mathbb{E}[Y\mid x]$ is the corresponding minimum
mean-square-error (MMSE) estimator, that is, a Bayesian denoiser. In this
language, the identity $m_\delta(x;f)=x+\delta\lambda\nabla\ln Z_\delta(x)$ provided in \eqref{Identitymdelta}, for the function $Z_\delta$ introduced in \eqref{def_Zdelta},  is  the so-called Tweedie's formula, and
$Dm_\delta(x;f)=\frac{1}{\delta\lambda}\operatorname{Cov}_{\sigma_\delta}(Y)$
is the associated posterior-covariance identity (see, e.g. \cite{Efron2011} and the references therein). The
firm nonexpansiveness and cocoercivity established in part~(ii) are
precisely the operator-theoretic properties invoked to guarantee
convergence of plug-and-play and regularization-by-denoising schemes built
on MMSE denoisers \cite{XuSunLiuWohlbergKamilov2020,PritchardParhi2025};
here they are obtained analytically, from the weak convexity of $f$ through
the Brascamp-Lieb inequality, rather than imposed through architectural
constraints or training.
\end{remark}

\subsection{Proximal Point Methods via Barycenter Approximation}

We next consider iterative schemes in which the exact proximal step is replaced by its barycentric approximation. As noted above, the convergence results are consequences of the broader framework of inexact proximal point and inexact forward-backward methods (see \cite{MR3942891}). We nevertheless state them explicitly in the present context in order to highlight the specific role of the stochastic approximation error and to show how the bounds established earlier yield concrete convergence estimates for the corresponding barycentric iterations. Replacing exact proximal steps by sampled surrogates of this kind inside splitting algorithms has recently been developed for the closely related HJ-Prox operator \cite{DiChiWuFung2026}, and, on the imaging side, plug-and-play proximal-gradient schemes driven by MMSE denoisers admit analogous nonasymptotic guarantees \cite{PritchardParhi2025}; the distinctive feature here is that the sharp bound of Theorem~\ref{convergence} makes the resulting approximation error fully explicit, namely $\|e_k\|\le c_\lambda\sqrt{\delta_k}$ with $c_\lambda$ given below.

\begin{theorem}\label{TeoConverAlg}
Let $f\colon \R^n\to\R\cup\{+\infty\}$ be a proper, lower semicontinuous, and convex function with
$\operatorname{int}\dom f\neq\emptyset$. Let $h\colon \mathbb{R}^n \to \mathbb{R}$ be a convex $C^1$ function with $L$-Lipschitz gradient  and assume that $\operatorname{argmin} (f+h)\neq\emptyset$.
Fix $\lambda\in (0,2/L)$ and let $(\delta_k)$ with $\delta_k>0$. Consider the sequence generated by iteration
$$
x_{k+1}=m_{\delta_k}(x_k-\lambda \nabla h(x_k);f), \quad k\geq 0.
$$
Then the following assertions hold:
\begin{itemize}
    \item[(i)]  If $\sum \delta_k^{1/2}<\infty$, then $(x_k)$ converges to some point $x^{\ast}\in \operatorname{argmin}(f+h)$.
\item[(ii)] If, in addition, $f$ is $\gamma$-strongly convex for some $\gamma>0$ and $\delta_k\to 0$,  then \(f+h\) has a unique minimizer \(x^\ast\), the sequence \((x_k)\) converges to \(x^\ast\), and
$$
\Vert x_{k}-x^{\ast}\Vert \leq q^k\Vert x_0-x^{\ast}\Vert +\sum_{i=0}^{k-1}q^{k-1-i}\varepsilon_i,
$$
where $q:=1/(1+\lambda \gamma)<1$ and $\varepsilon_k:=\sqrt{\frac{n\delta_k}{\gamma+1/\lambda}}$.
\end{itemize}
\end{theorem}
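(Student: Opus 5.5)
The plan is to treat the iteration as an inexact forward-backward scheme. Write
\[
x_{k+1}=\operatorname{prox}_{\lambda f}(x_k-\lambda\nabla h(x_k))+e_k,
\qquad e_k:=m_{\delta_k}(u_k;f)-\operatorname{prox}_{\lambda f}(u_k),\quad u_k:=x_k-\lambda\nabla h(x_k).
\]
Since $f$ is convex we have $\rho=0$ and $\mu=1/\lambda$. Theorem~\ref{convergence} then gives the uniform bound
\[
\|e_k\|\le c_\lambda\sqrt{\delta_k},\qquad c_\lambda:=\sqrt{2\lambda}\,\Gamma\!\left(\tfrac{n+1}{2}\right)/\Gamma\!\left(\tfrac n2\right),
\]
independently of $u_k$. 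The standing hypothesis $\operatorname{int}\operatorname{dom} f\neq\emptyset$ guarantees that $m_{\delta_k}$ is well defined everywhere.

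\textbf{Part (i).} Let $T:=\operatorname{prox}_{\lambda f}\circ(I-\lambda\nabla h)$.
\begin{itemize}
\item For $\lambda\in(0,2/L)$, Baillon--Haddad makes $I-\lambda\nabla h$ an averaged operator. Since $\operatorname{prox}_{\lambda f}$ is firmly nonexpansive, $T$ is averaged, and $\operatorname{Fix}T=\operatorname{argmin}(f+h)\neq\emptyset$.
\item The iteration reads $x_{k+1}=Tx_k+e_k$ with $\sum\|e_k\|\le c_\lambda\sum\delta_k^{1/2}<\infty$.
\item For any $x^\ast\in\operatorname{Fix}T$, nonexpansiveness gives $\|x_{k+1}-x^\ast\|\le\|x_k-x^\ast\|+\|e_k\|$. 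So $(\|x_k-x^\ast\|)$ is quasi-Fejér with summable errors, hence bounded and convergent.
\item Averagedness gives $\|x_k-Tx_k\|^2\lesssim \|x_k-x^\ast\|^2-\|Tx_k-x^\ast\|^2$. Combining this with $\|Tx_k-x^\ast\|\ge\|x_{k+1}-x^\ast\|-\|e_k\|$, and using boundedness, yields $\sum\|x_k-Tx_k\|^2<\infty$, so $x_k-Tx_k\to0$.
\item Continuity of $T$ shows every cluster point is a fixed point. An Opial-type argument (convergence of $\|x_k-x^\ast\|$ for each fixed point) gives convergence of the whole sequence.
\end{itemize}
Alternatively, these steps can be cited directly from the inexact Krasnosel'ski\u{\i}--Mann results in \cite{MR3942891}.

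\textbf{Part (ii).} Strong convexity of $f$ makes $f+h$ strongly convex, so the minimizer $x^\ast$ is unique and $x^\ast=T x^\ast$.
\begin{itemize}
\item If $f$ is $\gamma$-strongly convex, then $\operatorname{prox}_{\lambda f}$ is a contraction with constant $1/(1+\lambda\gamma)$. This follows from the resolvent of the $\gamma$-strongly monotone operator $\partial f$: its resolvent $(I+\lambda\partial f)^{-1}$ is $(1+\lambda\gamma)^{-1}$-Lipschitz.
\item Since $I-\lambda\nabla h$ is nonexpansive for $\lambda\le 2/L$, $T$ is a $q$-contraction.
\item Hence $\|x_{k+1}-x^\ast\|\le q\|x_k-x^\ast\|+\|e_k\|$, and unrolling gives the stated estimate, provided $\|e_k\|\le\varepsilon_k$.
\end{itemize}

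\textbf{Main obstacle.} The delicate step is this last inequality with the specific constant $\varepsilon_k=\sqrt{n\delta_k/(\gamma+1/\lambda)}$. Here $F_u(y)=f(y)+\frac1{2\lambda}\|y-u\|^2$ is $(\gamma+1/\lambda)$-strongly convex, not merely $1/\lambda$-strongly convex.
\begin{itemize}
\item Proposition~\ref{dominance-strongly-convex} gives radial dominance with $\psi(r)=\frac{\gamma+1/\lambda}{2}r^2$.
\item Theorem~\ref{general-theorem}(iii) then yields $\|e_k\|\le\sqrt{2\delta_k/(\gamma+1/\lambda)}\,\Gamma(\frac{n+1}{2})/\Gamma(\frac n2)$.
\item The inequality $\Gamma(\frac{n+1}{2})/\Gamma(\frac n2)\le\sqrt{n/2}$ follows from log-convexity of $\Gamma$: apply Jensen, or note that $\mathbb E[\widetilde R]\le(\mathbb E\widetilde R^2)^{1/2}=\sqrt{n\delta/\mu}$. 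This gives $\|e_k\|\le\varepsilon_k$.
\end{itemize}
One must also check that $\delta_k\to0$ makes the error sum $\sum_{i<k}q^{k-1-i}\varepsilon_i$ vanish, since a convolution of a geometric sequence with a null sequence tends to $0$. This gives $x_k\to x^\ast$.
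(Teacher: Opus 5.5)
Your proposal is correct and follows the paper's proof essentially step for step. It uses the same inexact forward--backward decomposition $x_{k+1}=Tx_k+e_k$ with $\|e_k\|\le c_\lambda\sqrt{\delta_k}$ from Theorem~\ref{convergence}, the inexact averaged-iteration convergence result for (i) (which you additionally unpack via the quasi-Fej\'er and Opial argument), and the $q$-contraction recursion together with $\Gamma(\frac{n+1}{2})/\Gamma(\frac n2)\le\sqrt{n/2}$ for (ii).

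Your route to the error bound with modulus $\gamma+1/\lambda$ goes through Proposition~\ref{dominance-strongly-convex} and Theorem~\ref{general-theorem}(iii). This is slightly more careful than the paper's direct appeal to Theorem~\ref{convergence}, which as stated only covers $\mu=1/\lambda-\rho$ with $\rho\ge 0$.
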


\begin{proof}
Define $T(x):=\operatorname{prox}_{\lambda f}(x-\lambda \nabla h(x))$. Then \(\operatorname{Fix}T=\argmin(f+h)\). Moreover, since \(h\) is convex and \(\nabla h\) is \(L\)-Lipschitz, the Baillon-Haddad theorem \cite[Corollary 18.16]{MR3616647} implies that \(\nabla h\) is \(1/L\)-cocoercive. Hence, for \(0<\lambda<2/L\), the map \(I-\lambda \nabla h\) is averaged, and therefore nonexpansive.

Since \(\operatorname{prox}_{\lambda f}\) is firmly nonexpansive, it follows that \(T\) is averaged, in particular nonexpansive. Next, by Theorem \ref{convergence} with \(\rho=0\), for every \(z\in\R^n\),
\[
\|m_{\delta_k}(z;f)-\operatorname{prox}_{\lambda f}(z)\|
\le
\sqrt{2\lambda\delta_k}\,
\frac{\Gamma\!\left(\frac{n+1}{2}\right)}
{\Gamma\!\left(\frac{n}{2}\right)}.
\]
Therefore, if we set $e_k:=m_{\delta_k}(x_k-\lambda\nabla h(x_k);f)-T(x_k)$,  then
\[
x_{k+1}=T(x_k)+e_k
\qquad\text{and}\qquad
\|e_k\|
\le
c_\lambda \sqrt{\delta_k},
\]
where $c_\lambda:=\sqrt{2\lambda}\, \Gamma\!\left(\frac{n+1}{2}\right) / \Gamma\!\left(\frac{n}{2}\right)$.\\
$(i)$:  Since \(T\) is averaged, \(\operatorname{Fix}T\neq\emptyset\), and
\(\sum_k \|e_k\|<\infty\), the standard convergence theorem for inexact
averaged iterations (see \cite{MR3942891}) yields that \((x_k)\) converges to some point
\(x^\ast\in \operatorname{Fix}T=\argmin(f+h)\).\\
$(ii)$: If \(f\) is \(\gamma\)-strongly convex, then
\(\operatorname{prox}_{\lambda f}\) is \((1+\lambda\gamma)^{-1}\)-Lipschitz.
 Hence, \(T\) is a contraction with
constant $q:=1/(1+\lambda\gamma)\in(0,1)$. Thus, if \(x^\ast\) denotes the unique fixed point of \(T\), then
\[
\|x_{k+1}-x^\ast\|
\le
q\|x_k-x^\ast\|+\|e_k\|.
\]
Moreover, for each \(k\), the function
\[
y\mapsto f(y)+\frac{1}{2\lambda}\|y-(x_k-\lambda\nabla h(x_k))\|^2
\]
is \((\gamma+1/\lambda)\)-strongly convex. Hence Theorem \ref{convergence}
gives
\[
\|e_k\|
\le
\sqrt{\frac{2\delta_k}{\gamma+1/\lambda}}\,
\frac{\Gamma\!\left(\frac{n+1}{2}\right)}
{\Gamma\!\left(\frac{n}{2}\right)}.
\]
Using the estimate
\[
\frac{\Gamma\!\left(\frac{n+1}{2}\right)}
{\Gamma\!\left(\frac{n}{2}\right)}
\le
\sqrt{\frac{n}{2}},
\]
we obtain
\[
\|e_k\|\le \varepsilon_k,
\qquad
\varepsilon_k:=\sqrt{\frac{n\delta_k}{\gamma+1/\lambda}}.
\]
Therefore,
\[
\|x_{k+1}-x^\ast\|
\le
q\|x_k-x^\ast\|+\varepsilon_k.
\]
Unrolling this recursion yields
\[
\|x_k-x^\ast\|
\le
q^k\|x_0-x^\ast\|
+
\sum_{i=0}^{k-1} q^{\,k-1-i}\varepsilon_i,
\qquad k\ge1.
\]
Since \(\delta_k\to0\), one has \(\varepsilon_k\to0\), and the right-hand
side converges to \(0\). Thus \(x_k\to x^\ast\).
\end{proof}
To make the convergence rate in Theorem~\ref{TeoConverAlg}~(ii) fully explicit when the smoothing parameter $\delta_k$ decays at some rate, we state the following  lemma on sequence convergence, which follows standar thecniques, so we omit its proof.
\begin{lemma}\label{LemmaRates}
Let \(q\in(0,1)\), and let \((\delta_k)_{k\geq 0}\) be a sequence of
nonnegative real numbers. For \(k\geq 1\), define
\[
    S_k:=\sum_{i=0}^{k-1} q^{k-1-i}\sqrt{\delta_i}.
\]
Then the following estimates hold:
\begin{enumerate}
    \item[(i)] If \(\delta_k=\mathcal{O}(k^{-r})\) as \(k\to\infty\), for some
    \(r>0\), then $  S_k=\mathcal{O}(k^{-r/2})$ as $k\to\infty$.

    \item[(ii)] If \(\delta_k=\mathcal{O}(p^k)\) as \(k\to\infty\), for some
    \(p\in(0,1)\), then $  S_k=
        \mathcal{O}\left(
        \max\left\{p^{k/4},q^{k/2}\right\}
        \right)$ as $k\to\infty$.
\end{enumerate}
\end{lemma}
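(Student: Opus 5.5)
The plan for both parts is to split $S_k$ at the midpoint index $\lfloor k/2\rfloor$. For small $i$ the geometric weight $q^{k-1-i}$ is already tiny. For large $i$ the term $\sqrt{\delta_i}$ is small, and the remaining geometric weights sum to at most $1/(1-q)$.

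First I convert the asymptotic hypotheses into bounds valid for every index. In case (i), $\delta_k=\mathcal O(k^{-r})$ together with finiteness of the first finitely many terms gives a constant $C$ with $\sqrt{\delta_i}\le C(i+1)^{-r/2}$ for all $i\ge0$. In case (ii), we get similarly $\sqrt{\delta_i}\le C p^{i/2}$ for all $i\ge 0$. I then write, for $k\ge 2$,
\[
S_k=\sum_{0\le i<k/2} q^{k-1-i}\sqrt{\delta_i}+\sum_{k/2\le i\le k-1} q^{k-1-i}\sqrt{\delta_i}=:S_k^{(1)}+S_k^{(2)}.
\]

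For the head $S_k^{(1)}$: since $i<k/2$ implies $k-1-i> k/2-1$, we have $q^{k-1-i}\le q^{k/2-1}$.
\begin{itemize}
\item In case (i), I bound $S_k^{(1)}\le q^{k/2-1}\cdot C\,(k/2+1)$. This is exponentially small and hence $o(k^{-r/2})$.
\item In case (ii), I bound $S_k^{(1)}\le q^{k/2-1}\sum_{i\ge0}Cp^{i/2}=\frac{C}{q(1-\sqrt p)}\,q^{k/2}$. Summing $\sqrt{\delta_i}$ over all $i$, rather than counting terms, avoids a spurious factor $k$ and gives exactly $\mathcal O(q^{k/2})$.
\end{itemize}

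For the tail $S_k^{(2)}$: the terms with $i\ge k/2$ satisfy the following bounds.
\begin{itemize}
\item In case (i), $\sqrt{\delta_i}\le C(k/2)^{-r/2}$.
\item In case (ii), $\sqrt{\delta_i}\le Cp^{k/4}$.
\end{itemize}
Multiplying by $\sum_{j\ge0}q^j=1/(1-q)$ gives $S_k^{(2)}=\mathcal O(k^{-r/2})$ and $S_k^{(2)}=\mathcal O(p^{k/4})$ respectively. Adding head and tail yields (i) and (ii), the latter as $\mathcal O(\max\{p^{k/4},q^{k/2}\})$.

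The only delicate point is the head estimate in (ii). There one must use summability of $\sqrt{\delta_i}$ instead of a crude term count; otherwise a factor $k$ appears and the stated $q^{k/2}$ rate is lost. Everything else is routine geometric-series bookkeeping.
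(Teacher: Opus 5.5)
Your proof is correct. The paper omits its proof as standard, and your split at $i\approx k/2$ is exactly the argument that produces the exponents in the statement: the head contributes $q^{k/2}$ and the tail contributes $p^{k/4}=(p^{k/2})^{1/2}$. The step you flag as delicate is not really an obstacle, since bounding $\sqrt{\delta_i}\le C$ and summing the geometric weights, $\sum_{i<k/2}q^{k-1-i}\le q^{k/2-1}/(1-q)$, already removes the factor $k$ in both (i) and (ii).
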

We are now in a position to establish the final result of the paper, concerning the convergence rates of the iterates under strong convexity. In particular, the corollary shows that, when the objective function is strongly convex, the convergence rate of the iterates inherits the convergence rate of the sequence $(\delta_k)$. The proof of this result is a simply combination of Theorem~\ref{TeoConverAlg} and Lemma~\ref{LemmaRates}, so we omit the details.

\begin{corollary}
Under the assumptions of Theorem~\ref{TeoConverAlg}, suppose that \(f\) is
\(\gamma\)-strongly convex for some \(\gamma>0\). Let \(x^\ast\) denote the
unique minimizer of \(f+h\). Then the following convergence rates hold:
\begin{enumerate}
    \item[(i)] If \(\delta_k=\mathcal{O}(k^{-r})\) for some \(r>2\), then $\|x_k-x^\ast\| = \mathcal{O}(p^{k/2})$.

    \item[(ii)] If \(\delta_k=\mathcal{O}(p^k)\) for some \(p\in(0,1)\), then $ \|x_k-x^\ast\|
        =
        \mathcal{O}\left(
        \max\left\{p^{k/4},q^{k/2}\right\}
        \right)$.
\end{enumerate}
\end{corollary}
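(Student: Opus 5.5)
The plan is to combine the quantitative estimate of Theorem~\ref{TeoConverAlg}(ii) with the sequence estimates of Lemma~\ref{LemmaRates}. The argument has three steps.

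\textbf{Step 1: reduce to the sum $S_k$.} Since $f$ is $\gamma$-strongly convex, Theorem~\ref{TeoConverAlg}(ii) gives, for every $k\ge 1$,
\[
\|x_k-x^\ast\|\le q^k\|x_0-x^\ast\|+\sum_{i=0}^{k-1}q^{k-1-i}\varepsilon_i,
\qquad \varepsilon_i=\sqrt{\frac{n\delta_i}{\gamma+1/\lambda}},
\]
with $q=1/(1+\lambda\gamma)\in(0,1)$. Set $c:=\sqrt{n/(\gamma+1/\lambda)}$. Then $\varepsilon_i=c\sqrt{\delta_i}$, and the second term equals $c\,S_k$, where $S_k$ is exactly the quantity of Lemma~\ref{LemmaRates}. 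So
\[
\|x_k-x^\ast\|\le q^k\|x_0-x^\ast\|+c\,S_k .
\]
It therefore suffices to bound $S_k$ and to check that the geometric term $q^k\|x_0-x^\ast\|$ is dominated by the bound on $S_k$.

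\textbf{Step 2: the polynomial case.} If $\delta_k=\mathcal O(k^{-r})$, Lemma~\ref{LemmaRates}(i) gives $S_k=\mathcal O(k^{-r/2})$. Since $q<1$, we have $q^k=o(k^{-s})$ for every $s>0$, so the geometric term is absorbed and $\|x_k-x^\ast\|=\mathcal O(k^{-r/2})$. The hypothesis $r>2$ additionally makes $\sum\sqrt{\delta_k}<\infty$, which matches the summable-error regime of Theorem~\ref{TeoConverAlg}(i).

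The symbol $p$ in item (i) is not introduced by its hypothesis, and a purely polynomial decay of $\delta_k$ cannot yield a geometric rate. So the conclusion I would actually prove is the polynomial rate $\mathcal O(k^{-r/2})$. I would read $p^{k/2}$ in the statement as a misprint for $k^{-r/2}$ and state this explicitly in the proof.

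\textbf{Step 3: the geometric case.} If $\delta_k=\mathcal O(p^k)$ with $p\in(0,1)$, Lemma~\ref{LemmaRates}(ii) gives $S_k=\mathcal O(\max\{p^{k/4},q^{k/2}\})$. Since $q^k\le q^{k/2}$, the first term is also $\mathcal O(\max\{p^{k/4},q^{k/2}\})$, which yields (ii).

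\textbf{Main obstacle.} The substance lies in Lemma~\ref{LemmaRates}, which the paper omits, so I would at least sketch it.
\begin{itemize}
\item For (i), split $S_k$ at $i=\lfloor k/2\rfloor$. For $i\ge k/2$, bound $\sqrt{\delta_i}\lesssim (k/2)^{-r/2}$ and $\sum_j q^j\le 1/(1-q)$. For $i<k/2$, bound each weight by $q^{k/2}$ and use $\sup_i\sqrt{\delta_i}<\infty$. This gives $S_k\lesssim k^{-r/2}+k\,q^{k/2}=\mathcal O(k^{-r/2})$.
\item For (ii), write $S_k\lesssim\sum_i q^{k-1-i}p^{i/2}$. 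When $p^{1/2}\neq q$ this is comparable to $\max\{p^{1/2},q\}^k$. When $p^{1/2}=q$ it is comparable to $k\,q^k$. In both cases it is $\mathcal O(\max\{p^{k/4},q^{k/2}\})$, since $k\,a^{k}=\mathcal O(a^{k/2})$ for $a<1$.
\end{itemize}
Thus the halved exponents absorb the possible resonance factor $k$. This is the only delicate point.
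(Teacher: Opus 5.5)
Your proposal is correct and takes the paper's route: it inserts $\varepsilon_i=c\sqrt{\delta_i}$ from Theorem~\ref{TeoConverAlg}(ii) into Lemma~\ref{LemmaRates} and absorbs the geometric term $q^k\|x_0-x^\ast\|$, and your sketch of the omitted lemma is sound (including the case $p^{1/2}=q$, where halving the exponents absorbs the factor $k$). You are also right that $p^{k/2}$ in item (i) cannot be meant literally, since $p$ is undefined there and polynomially decaying $\delta_k$ cannot give a geometric rate in general; the paper's argument actually yields $\mathcal{O}(k^{-r/2})$, which is what you prove.
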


As a final comment of this section, one may also consider the fixed-parameter regime
\(\delta_k\equiv\delta>0\). In this case, the scheme is no longer an
inexact proximal method with a vanishing error; rather, it naturally leads
to fixed-point schemes, such as the Krasnosel'skii--Mann iteration, applied
to the smoothed barycentric operator \(m_\delta(\cdot;f)\). This viewpoint
is consistent with the SoftMax-function interpretation developed in
\cite{NaldiLabarriereMolinariVilla2026}. Moreover, the existence of fixed
points follows directly from the integrability assumption. Indeed, if
\(e^{-f/\delta}\in L^1(\mathbb R^n)\) and
\(\int_{\mathbb R^n} e^{-f(y)/\delta}\,dy>0\), then the Gaussian
normalization factor associated with \(m_\delta(\cdot;f)\) is positive,
smooth, and vanishes at infinity; hence it attains a maximum. At any
maximizer \(x_\delta\), the first-order optimality condition implies
\(m_\delta(x_\delta;f)=x_\delta\). Thus, under the convex assumptions that
ensure firm nonexpansiveness of \(m_\delta(\cdot;f)\) (see Proposition \ref{nonexpansive}), the corresponding fixed-point iteration converges to a fixed point of the SoftMax-function (see \cite{NaldiLabarriereMolinariVilla2026}). This point is generally not an exact minimizer of the
original nonsmoothed problem, but the estimates obtained in this paper quantify the
deterministic bias introduced by keeping \(\delta>0\): one smoothed
proximal step remains within order \(\sqrt{\delta}\) of the corresponding
classical proximal step, with the explicit constant given in
Theorem~\ref{convergence}.

\section{Numerical Experiments}\label{numerics}

In this section we numerically illustrate the main conclusions of the paper:
the uniform $\sqrt{\delta}$ convergence bound of Theorem~\ref{convergence},
its explicit dimension-dependent constant, and the asymptotic sharpness
mechanism exhibited by the cone $C_\alpha$ in Theorem~\ref{Stochastic-dominance}. We also test the abstract radial dominance bound on a genuinely weakly convex, nonsmooth example. All experiments are fully reproducible; Monte Carlo estimates use $3\times 10^{6}$ samples with fixed seed, and the reported quantities agree with the closed-form expressions to the displayed precision.

\subsection{Convergence rate for projection onto a corner}
Let $C=\mathbb{R}^n_+$ be the nonnegative orthant and take $x=0$, the corner vertex, which is the extremal point of nonsmoothness of the boundary. For the projection estimator ($f=\iota_C$, $\lambda=1$) the deterministic bias admits the closed form $\|p_\delta(0)-P_C(0)\|=\sqrt{\frac{2n\delta}{\pi}}$, so the exact error decays at the rate $\sqrt{\delta}$ predicted by the corollary to Theorem~\ref{convergence}. Figure~\ref{fig:rate} plots, for $n=5$, the theoretical bound $\sqrt{2\delta/\mu}\,\Gamma(\tfrac{n+1}{2})/\Gamma(\tfrac n2)$ (with $\mu=1$), the exact error, and a Monte Carlo estimate, on a log-log scale over $\delta\in[2^{-7},1]$. The three curves are parallel straight lines of slope $\tfrac12$; a linear fit to the Monte Carlo data yields an empirical slope of $0.5000$. The bound is uniformly valid and tight up to the constant factor $\sqrt{n/\pi}\,\Gamma(\tfrac n2)/\Gamma(\tfrac{n+1}{2})$, which equals $0.8385$ for $n=5$ and decreases monotonically to $\sqrt{2/\pi}\approx 0.7979$ as $n\to\infty$.

\begin{figure}[ht]
\centering
\begin{tikzpicture}
\begin{loglogaxis}[
    width=0.72\textwidth, height=0.5\textwidth,
    xlabel={$\delta$}, ylabel={mean displacement},
    legend pos=north west, legend cell align={left},
    grid=both, grid style={gray!20},
    log basis x=2,
]
\addplot[blue, thick, mark=*] coordinates {
(1,2.127692)(0.5,1.504506)(0.25,1.063846)(0.125,0.752253)(0.0625,0.531923)(0.03125,0.376126)(0.015625,0.265962)(0.0078125,0.188063)
};
\addplot[red, thick, mark=square*] coordinates {
(1,1.784124)(0.5,1.261566)(0.25,0.892062)(0.125,0.630783)(0.0625,0.446031)(0.03125,0.315392)(0.015625,0.223016)(0.0078125,0.157696)
};
\addplot[black, only marks, mark=x, mark size=3pt] coordinates {
(1,1.784628)(0.5,1.261822)(0.25,0.891967)(0.125,0.631130)(0.0625,0.445952)(0.03125,0.315488)(0.015625,0.223009)(0.0078125,0.157679)
};
\legend{{Theoretical bound}, {Exact error}, {Monte Carlo}}
\end{loglogaxis}
\end{tikzpicture}
\caption{Convergence of the projection estimator onto the corner of $\mathbb{R}^5_+$ at $x=0$. The theoretical bound of Theorem~\ref{convergence}, the exact mean displacement $\sqrt{2n\delta/\pi}$, and a Monte Carlo estimate all decay at the sharp rate $\sqrt\delta$ (slope $\tfrac12$ on the log-log scale).}
\label{fig:rate}
\end{figure}

\subsection{Asymptotic sharpness on the spherical-cap cone}
Theorem~\ref{Stochastic-dominance} identifies the spherical cap cone $C_\alpha=\{y:\langle y,e\rangle\ge\|y\|\cos\alpha\}$ of half-angle $\alpha$ as the configuration on which the radial dominance bound becomes asymptotically sharp as $\alpha\to0$. The relevant tightness ratio is $\mathbb{E}[\cos\Theta\mid\Theta\le\alpha]$, where $\Theta$ is the polar angle of a Gaussian vector; the bound is attained in the limit precisely when this ratio tends to $1$. Figure~\ref{fig:sharpness} reports this ratio, computed by numerical quadrature and independently validated by Monte Carlo, as a function of $\alpha$ for $n\in\{2,5,20\}$. In every dimension the ratio increases to $1$ as the half-angle shrinks, confirming that the bound cannot be improved within the radial dominance framework; the convergence is slower in higher dimensions, consistent with the concentration of Gaussian mass away from the axis.
\begin{figure}[htbp]
\centering
\begin{tikzpicture}
\begin{axis}[
    width=0.72\textwidth, height=0.5\textwidth,
    xlabel={half-angle $\alpha$ (degrees)}, ylabel={tightness ratio},
    legend pos=south west, legend cell align={left},
    grid=both, grid style={gray!20},
    xmin=0, xmax=62, ymin=0.5, ymax=1.02,
]
\addplot[blue, thick, mark=*] coordinates {
(60,0.82699)(50,0.87782)(40,0.92073)(30,0.95493)(25,0.96857)(20,0.97982)(15,0.98862)(10,0.99493)(7,0.99751)(5,0.99873)(3,0.99954)(2,0.99980)(1,0.99995)
};
\addplot[red, thick, mark=square*] coordinates {
(60,0.67500)(50,0.76588)(40,0.84568)(30,0.91121)(25,0.93779)(20,0.95990)(15,0.97732)(10,0.98988)(7,0.99503)(5,0.99746)(3,0.99909)(2,0.99959)(1,0.99990)
};
\addplot[black!60!green, thick, mark=triangle*] coordinates {
(60,0.55962)(50,0.68218)(40,0.79040)(30,0.87941)(25,0.91553)(20,0.94556)(15,0.96921)(10,0.98626)(7,0.99326)(5,0.99656)(3,0.99876)(2,0.99945)(1,0.99986)
};
\addplot[gray, dashed, thick, domain=0:62] {1};
\legend{{$n=2$}, {$n=5$}, {$n=20$}}
\end{axis}
\end{tikzpicture}
\caption{Tightness ratio $\mathbb{E}[\cos\Theta\mid\Theta\le\alpha]$ on the cone $C_\alpha$ as the half-angle $\alpha\to0$, for $n\in\{2,5,20\}$. The ratio approaches $1$ in every dimension, confirming the asymptotic sharpness of Theorem~\ref{Stochastic-dominance}; higher dimensions converge more slowly.}
\label{fig:sharpness}
\end{figure}

\subsection{A weakly convex nonsmooth example}
To probe the bound beyond the convex projection setting, we consider the separable weakly convex function $f(y)=\sum_{i=1}^n\big(\tau|y_i|-\tfrac{\rho}{2}y_i^2\big)$ on $\mathbb{R}^n$ with $n=8$, $\lambda=0.5$, $\tau=1$. The parameter $\rho\ge0$ controls the concave (weakly convex) part: $\rho=0$ recovers the convex $\ell_1$ proximal map, while $\rho=0.8$ makes $f$ genuinely $\rho$-weakly convex. Table~\ref{tab:weakly} reports, for two base points and decreasing $\delta$, the measured mean error, the theoretical bound of Theorem~\ref{convergence}, and their ratio. At a generic base point the estimator decays strictly faster than $\sqrt\delta$ because the proximal map is locally smooth there, so the ratio drifts toward $0$; at the kink configuration $x_i=\lambda\tau$, where the proximal map is nonsmooth, the error decays at exactly the $\sqrt\delta$ rate and the ratio saturates near $0.8$, matching the sharp constant. In all cases the bound is respected uniformly in $\delta$.

\begin{table}[htbp]
\centering
\caption{Weakly convex separable example ($n=8$, $\lambda=0.5$, $\tau=1$). Mean error, theoretical bound, and their ratio for two base points and decreasing $\delta$. At the nonsmooth kink configuration the ratio saturates near the sharp constant; at a generic point the error decays faster than $\sqrt\delta$.}
\label{tab:weakly}
\begin{tabular}{@{}llccc@{}}
\toprule
 {Configuration} &  {$\delta$} &  {Mean error} &  {Bound} &  {Ratio} \\
\midrule
 {Generic $x$, $\rho=0$}        &  {$0.5$}      &  {$0.3222$} &  {$1.3708$} &  {$0.235$} \\
 {}                             &  {$0.125$}    &  {$0.1508$} &  {$0.6854$} &  {$0.220$} \\
 {}                             &  {$0.03125$}  &  {$0.0483$} &  {$0.3427$} &  {$0.141$} \\
 {}                             &  {$0.0078125$}&  {$0.0106$} &  {$0.1714$} &  {$0.062$} \\
\midrule
 {Generic $x$, $\rho=0.8$}      &  {$0.5$}      &  {$0.4174$} &  {$1.7697$} &  {$0.236$} \\
 {}                             &  {$0.125$}    &  {$0.1752$} &  {$0.8849$} &  {$0.198$} \\
 {}                             &  {$0.03125$}  &  {$0.0463$} &  {$0.4424$} &  {$0.105$} \\
 {}                             &  {$0.0078125$}&  {$0.0108$} &  {$0.2212$} &  {$0.049$} \\
\midrule
 {Kink $x_i=\lambda\tau$, $\rho=0.8$} &  {$0.5$}      &  {$1.0178$} &  {$1.7697$} &  {$0.575$} \\
 {}                             &  {$0.125$}    &  {$0.6123$} &  {$0.8849$} &  {$0.692$} \\
 {}                             &  {$0.03125$}  &  {$0.3352$} &  {$0.4424$} &  {$0.758$} \\
 {}                             &  {$0.0078125$}&  {$0.1749$} &  {$0.2212$} &  {$0.791$} \\
\bottomrule
\end{tabular}
\end{table}

\section{Conclusions and Future Perspectives}

In this paper, we introduced a radial dominance framework to study the approximation properties of stochastic barycentric estimators. By bounding the estimator's performance against an explicit one-dimensional comparison measure, we established an asymptotically sharp, nonasymptotic convergence rate for proximal operators of weakly convex functions and metric projections onto closed convex sets. Furthermore, we demonstrated that these barycentric operators inherit desirable structural properties, such as smoothness and cocoercivity, making them highly suitable for proximal-type algorithms. The theoretical predictions were corroborated numerically in Section~\ref{numerics}: the projection estimator on the orthant attains the predicted $\sqrt\delta$ rate and remains uniformly below the sharp dimension-dependent bound, the tightness ratio on the cone $C_\alpha$ confirms the asymptotic sharpness of Theorem~\ref{Stochastic-dominance}, and the bound remains valid on a genuinely weakly convex nonsmooth example, approaching the corresponding nonsmooth worst-case ratio at kink configurations.

Several interesting avenues for future research remain open. First, while our analysis relies heavily on the rotational symmetry of the Gaussian perturbations, it would be of interest to extend this radial dominance framework to heavy-tailed distributions (such as Student's $t$ or Cauchy), which are often used in stochastic search to encourage exploration. Second, extending these explicit dimensional bounds to infinite-dimensional Hilbert spaces or specific Banach spaces remains a challenging open question.

\section*{Acknowledgements}

Pedro P\'erez-Aros was supported by ANID (Chile) through Fondecyt Regular grants No.~1240120 and No.~1261728, CMM BASAL funding for the Center of Excellence FB210005; and the project ECOS230027. 

Emilio Vilches was supported by ANID (Chile) through Fondecyt Regular grants No.~1240120 and No.~1261728, CMM BASAL funding for the Center of Excellence FB210005, and the project ECOS230027.

\bibliographystyle{abbrv}
\bibliography{bib.bib}




\end{document}